\theoremstyle{theorem}
\newtheorem{thm}{Theorem}
\newtheorem{prop}[thm]{Proposition}
\newtheorem{lem}[thm]{Lemma}
\newtheorem{cor}[thm]{Corollary}
\theoremstyle{definition}
\newtheorem{defn}[thm]{Definition}
\DeclareMathOperator\Diff{Diff}
\DeclareMathOperator\Homeo{Homeo}
\DeclareMathOperator\emb{Emb}
\DeclareMathOperator\Map{Map}
\DeclareMathOperator*\colim{colim}
\DeclareMathOperator\BSO{BSO}
\newcommand{\interior}[1]{\smash{\mathring{#1}}}
\DeclareMathOperator\sep{Sep} 
\newcommand{\septop}{\sep^{\rm top}}
\newcommand{\dsep}{\operatorname{DSep}^{\rm top}}
\newcommand{\ca}{\!\mid\!} 
\newcommand{\Sing}{\mathrm{S}_\bullet}
\newcommand{\SingHomeo}{\mathrm{S}_\bullet\!\Homeo} 
\newcommand{\D}{\mathrm{D}}
\newcommand{\hq}{/\!\!/} 
\newcommand{\Top}{\mathrm{Top}}
\newcommand{\BHomeo}{B\!\Homeo}
\newcommand{\BDiff}{B\!\Diff}
\newcommand{\bbQ}{\mathbb{Q}}
\newcommand{\Z}{\mathbb{Z}}
\newcommand{\bt}{\bullet}
\newcommand{\fakeenv}{} 
\newenvironment{restate}[2]  
{
  \renewcommand{\fakeenv}{#2} 
  \theoremstyle{plain}
  \newtheorem*{\fakeenv}{#1~\ref{#2}} 
  \begin{\fakeenv}  
}
{
  \end{\fakeenv}
}
\title
{Extending families of homeomorphisms over 4-dimensional handlebodies}
\author{Rachael Boyd}
\address{School of Mathematics and Statistics, University of Glasgow, Glasgow G12 8QQ, UK}
\email{rachael.boyd@glasgow.ac.uk}
\urladdr{https://www.maths.gla.ac.uk/~rboyd/} 
\author{Corey Bregman}
\address{Department of Mathematics, Tufts University, Medford, MA 02155, USA}
\email{corey.bregman@tufts.edu}
\urladdr{https://sites.google.com/view/cbregman}
\author{Jan Steinebrunner}
\address{Gonville \& Caius College,
University of Cambridge,
Cambridge, 
UK}
\email{js2675@cam.ac.uk}
\urladdr{https://www.jan-steinebrunner.com}
\subjclass[2020]{
        57T20, 
        58D29  
        (primary),
        57M50,  
        55R40, 
        (secondary)}
\begin{document}

\newpage
\maketitle
\begin{abstract}
   Let $H_g$ denote the  4-dimensional handlebody of genus $g$ and $U_g$ its boundary. We show that for all $g \ge 0$ the map from $\BHomeo(H_g)$ to $\BHomeo(U_g)$ induced by restriction to the boundary admits a section.
\end{abstract}

\section*{Introduction}
Let $U_g=(S^1\times S^2)^{\# g}$ be the boundary of the $4$-dimensional handlebody $H_g=(S^1\times D^3)^{\natural g}$, and let $\Homeo(M)$ be the group of homeomorphisms (not required to be orientation preserving) of a manifold $M$.
Our main result is the following.

\begin{restate}{Theorem}{thm:section}
    For all $g \ge 0$ the map
    $\BHomeo(H_g) \overset{\partial}{\to}\BHomeo(U_g)$
    has a section.
\end{restate}

We now discuss some immediate consequences of the theorem.
Since $\BHomeo(M)$ classifies topological $M$ bundles, this shows that any topological $U_g$ bundle can be fiberwise filled by a topological $H_g$ bundle. 
In fact, this can also be applied to families of 4-manifolds with $U_g$ boundary, via the following corollary.

\begin{restate}{Corollary}{cor:section}
    Let $M$ be a compact $4$-manifold with $U_g \subseteq \partial M$.
    Then the restriction map
    \[
        \BHomeo(M \cup_{U_g} H_g, H_g) \longrightarrow \BHomeo(M, U_g)
    \]
    admits a section.
\end{restate}

Laudenbach--Poenaru \cite{LaudenbachPoenaru1972} proved that every diffeomorphism of $U_g$ can be extended to a diffeomorphism of $H_g$.
It follows from local connectedness results due independently to Fisher \cite{Fisher}, Kister \cite{Kister}, Hamstrom \cite{Hamstrom}, and approximation results of Moise and Bing \cite{Moise54,Bing54}, that the same holds for homeomorphisms.
Our theorem extends this result to families. With this viewpoint, there is scope to apply \cref{cor:section} to study families of homeomorphisms of 4-manifolds obtained via Kirby diagrams (see \cite[Section 3]{NiuThesis}) or trisections (see, e.g.~\cite[Gay, p.~240]{OWR-Morphisms-in-low-dimensions}). 
Of course, for these settings it would be more natural to use a section for diffeomorphisms.
 
Ideally we would like to loop the  section in \Cref{thm:section} to obtain a section $\Omega s$ on the level of topological groups, but for a 3-manifold $M$, it is not known that $\Homeo(M)$ has the homotopy type of a CW complex and so we only have a section after taking CW replacements, \emph{i.e.}~after passing to $|\SingHomeo(-)|$.
In particular, this yields a weak equivalence
\[
    \Homeo_\partial(H_g) \times \Homeo(U_g) \simeq \Homeo(H_g).
\]

Restricting to mapping class groups we obtain the following corollary:
\begin{align*}
    \pi_0\Homeo(H_g) 
    & \cong \pi_0\Homeo(U_g) \ltimes \pi_0 \Homeo_\partial(H_g) \\
    & \cong \left(\mathrm{Out}(F_g) \ltimes (\Z/2)^g\right) \ltimes \pi_0 \Homeo_\partial(H_g)
\end{align*}
where the second isomorphism is Brendle--Broaddus--Putman \cite[Theorem A]{Brendle2023}.
Recall that Budney--Gabai showed that $\pi_0\Homeo_\partial(H_1)$ is not finitely generated~\cite{BudneyGabai2023} 
and thus we conclude $\pi_0 \Homeo(H_1)$ is not finitely generated. 
(This can already be deduced from \cite{BudneyGabai2023} using work of Hatcher \cite{hatcher1981,Hatcher1981revised}, but perhaps for higher genus our perspective could be utilised.)

The section from \Cref{thm:section} allows us to define rational characteristic classes for $U_g$-bundles analogous to the Miller--Morita--Mumford classes for surfaces.
(Recall that the usual generalised MMM-classes $\kappa_c$ are all trivial in dimension $3$ by a theorem of Ebert \cite{Ebert2013}.)
For every monomial $c = p_1^a e^b$ in $H^*(\BSO(4);\bbQ) \cong \bbQ[p_1,e]$ and every $H_g$ fiber bundle $\pi\colon E \to B$ over a compact manifold $B$ we can define
$c(\mathfrak{t}_v E) \in H^{4(a+b)}(E; \bbQ)$ where $\mathfrak{t}_vE$ is the vertical tangent microbundle, as in \cite[\S4]{Ebert-RW-generalised-MMM}.
Taking the Becker--Gottlieb transfer yields
\(
    \pi^! c(\mathfrak{t}_v E) \in H^{4(a+b)}(B; \bbQ)
\).
By varying $B$ as in \cite[Proposition 4.2]{Ebert-RW-generalised-MMM} these classes assemble into a characteristic class 
\[
    \alpha_c \in H^{4(a+b)}(\BHomeo(H_g); \bbQ)
\]
and pulling it back under the section from \Cref{thm:section} we get
\(
    s^*\alpha_c \in H^{4(a+b)}(\BHomeo(U_g); \bbQ).
\)
Hatcher proposed during a talk \cite{Hatcher2012} that the stable cohomology of $\BHomeo_{D^3}(U_g)$ as $g \to \infty$ should be given by
\[
    H^*(\colim_{g \to \infty} \BHomeo_{D^3}(U_g); \bbQ) 
    \stackrel{\text{conj.}}{\cong} 
    H^*(\Omega^\infty \Sigma^\infty_+ \BSO(4); \bbQ) 
    \cong 
    \bbQ\left[ \rho_c \;|\; c \text{ monomial in } p_1 \text{ and } e\right].
\]
The above construction of $s^*\alpha_c$ yields reasonable candidates for the restriction of these generators $\rho_c$ to $\BHomeo(U_g)$ for all $g$.
Hatcher's approach to computing the stable homology implicitly uses that a section as in \Cref{thm:section} exists, in order to obtain the description of the stable homology above.

\subsection*{Outline of paper} The proof uses the space of separating systems developed in previous work of the authors \cite{BoydBregmanSteinebrunner-finiteness}, which studied $\BDiff(M)$, imported into the topological setting. This setting involves passing to a singular set-up and we introduce the necessary theory in \Cref{sec:simplicialprelims}. The topological separating systems are the topic of \Cref{sec:topsep}, and the theorem is proved in \Cref{sec:proof}.

\subsection*{Acknowledgements}
We would like to thank Mark Powell and Oscar Randal-Williams for helpful comments and conversations.
During this work the first author was supported by EPSRC Fellowship EP/V043323/2. 
The second author was supported by NSF grant DMS-2401403. 
The third author was supported by the Independent Research Fund Denmark (grant no.~10.46540/3103-00099B)
and the Danish National Research Foundation through the ‘Copenhagen Centre for Geometry and Topology’ (grant no.~CPH-GEOTOP-DNRF151).

We would like to thank the Isaac Newton Institute for funding the satellite programme \emph{Topology, representation theory and higher structures} based at Gaelic College, Sabhal Mòr Ostaig, Isle of Skye, where some of this work was carried out.

\section{Simplicial preliminaries}\label{sec:simplicialprelims}
For $X$ a topological space, let $\Sing X$ denote the singular simplicial set defined via $\mathrm{S}_n X = \Map_{\Top}(\Delta^n, X)$.
In particular, for $\Homeo(M)$ the group of homeomorphisms $\varphi\colon M \to M$, we let $\SingHomeo(M)$ denote the resulting simplicial group.
We can alternatively think of an $n$-simplex in $\mathrm{S}_n\Homeo(M)$ as a homeomorphism $\Delta^n \times M \to \Delta^n \times M$ that commutes with the projection to $\Delta^n$. Face and degeneracy maps are inherited from~$\Delta^n$.

We will work with these simplicial sets instead of topological spaces, hence we will encounter bi-simplicial sets whenever one would otherwise consider simplicial (topological) spaces.
The geometric realisation of a simplicial (topological) space is then replaced by taking the diagonal bisimplicial set $\delta(Y_{\bt, \bt})$ defined by $\delta(Y)_n = Y_{n,n}$.
For example, if a simplicial group $G$ acts on a simplicial set $X$, then the bar construction
\[
    \mathrm{Bar}_n(X_m, G_m, *) = X_m \times (G_m)^{\times n}
\]
is a bisimplicial set, and we define the homotopy orbit construction to be its diagonal
\[
    X\hq G := \delta( \mathrm{Bar}_\bt(X_\bt, G_\bt, *) ).
\]
The realisation of the diagonal $|\delta(Y)|$ is homeomorphic to the realisation of the simplicial space $[n] \mapsto |Y_{\bt,n}|$.
In fact, $|\delta(Y)|$ is also weakly equivalent to the more well-behaved fat geometric realisation of $[n] \mapsto |Y_{\bt,n}|$ \cite[Theorem 7.1 and Lemma 1.7]{EbertRandalWilliams}.
Therefore
\[
    |X \hq G| \simeq \|[n] \mapsto \mathrm{Bar}_n(|X|, |G|, *) \| = |X|\hq |G|.
\]
The simplicial set $EG = G \hq G$ is contractible and admits a (left) $G$-action that is level-wise free.
We can alternatively write the homotopy orbit construction as the quotient of the diagonal action
\[  
    X \times EG \longrightarrow (X \times EG)/G \cong X \hq G.
\]
This map is a Kan fibration with fiber $G$ \cite[Lemma 18.2]{May1992simplicial}.

For a topological group $H$ (such as $\Homeo(M)$ or $\Diff(M)$) we can find an $H$ principal bundle $EH \to BH$ such that $BH$ is a CW complex and $EH$ is weakly contractible (take Milnor's construction \cite{Milnor1956} and pull back along a CW approximation).
This $BH$ classifies $H$-principal bundles over spaces homotopy equivalent to CW complexes, and is unique up to homotopy equivalence.
$\Sing H$ is a simplicial group and there are equivalences
\[
    |* \hq \Sing H| \simeq * \hq |\Sing H| = B(|\Sing H|) \simeq BH.
\]
Using this model, $\Omega BH$ is always weakly homotopy equivalent to $H$ and they are homotopy equivalent when $H$ has the homotopy type of a CW complex.

We will need the following lemma for comparing homotopy orbit constructions, which is a simplicial analogue of \cite[Lemma 2.10]{BoydBregmanSteinebrunner-finiteness}.
\begin{lem}[Orbit stabiliser lemma]\label{lem:orbit-stabiliser}
     Let $\varphi\colon G \to H$ be a simplicial group homomorphism and $f\colon X \to Y$ a map of simplicial sets such that $G$ acts on $X$, $H$ acts on $Y$, and $f$ is $G$-equivariant.
     Assume that for each $y \in Y_0$ there is an $h \in H_0$ and $x \in X_0$ such that $h\cdot f(x)$ is in the same path component as $y$. Assume further that one of the following two conditions holds.
     \begin{enumerate}
         \item For each $x \in X_0$ the commutative square
         \[\begin{tikzcd}
             G \ar[r, "{-\cdot x}"] \ar[d, "\varphi"'] & X \ar[d, "f"] \\
             H \ar[r, "{-\cdot f(x)}"] & Y 
         \end{tikzcd}\]
         is a homotopy pullback square.
         \item
         The group actions are such that for all $x \in X_0$ the maps $-\cdot x\colon G \to X$ and $-\cdot f(x)\colon H \to Y$ are Kan fibrations and the induced map on stabilisers
         \[
            \mathrm{Stab}_G(x) \longrightarrow \mathrm{Stab}_H(f(x))
         \]
         is a weak equivalence.
     \end{enumerate}
     Then the induced map on homotopy orbits is a weak equivalence
     \[
            X\hq G \longrightarrow  Y\hq H.
     \]
\end{lem}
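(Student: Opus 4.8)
The plan is to verify that $X\hq G\to Y\hq H$ is a weak equivalence by checking it separately on path components and on loop spaces. As usual with the model $X\hq G=(X\times EG)/G$, one has a Kan fibration $X\to X\hq G\to {*}\hq G=BG$ with fibre $X$, and likewise $Y\to Y\hq H\to BH$, compatibly with $B\varphi\colon BG\to BH$; moreover $\Omega BG\simeq G$. Since a map of simplicial sets is a weak equivalence once it is a $\pi_0$-bijection and induces a weak equivalence on the loop space based at each vertex of the source (and its image), and since every class of $\pi_0(X\hq G)=\pi_0(X)/\pi_0(G)$ is represented by some $x\in X_0$, it suffices to prove (i) that $\pi_0(X\hq G)\to\pi_0(Y\hq H)$ is a bijection, and (ii) that for each $x\in X_0$ the map $\Omega_{[x]}(X\hq G)\to\Omega_{[f(x)]}(Y\hq H)$ is a weak equivalence.

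For (ii), I would identify loop spaces of homotopy orbits with homotopy stabilisers. Basing the fibration $X\to X\hq G\to BG$ at $x\in X\subseteq X\hq G$ and prolonging the fibre sequence to the left gives $\Omega_{[x]}(X\hq G)\to\Omega BG\to X$, and under $\Omega BG\simeq G$ the connecting map $\Omega BG\to X$ is the orbit map $-\cdot x$; hence $\Omega_{[x]}(X\hq G)\simeq\hofib_x(-\cdot x\colon G\to X)$, naturally in the pair $(\varphi,f)$, and likewise $\Omega_{[f(x)]}(Y\hq H)\simeq\hofib_{f(x)}(-\cdot f(x)\colon H\to Y)$. So (ii) becomes the assertion that the comparison $\hofib_x(-\cdot x\colon G\to X)\to\hofib_{f(x)}(-\cdot f(x)\colon H\to Y)$ is a weak equivalence for every $x\in X_0$. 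Under hypothesis~(1) this is exactly the statement that the displayed square is a homotopy pullback, read off as the induced map on the homotopy fibres of its two horizontal arrows. Under hypothesis~(2) the orbit maps $-\cdot x$ and $-\cdot f(x)$ are Kan fibrations, so these homotopy fibres agree with the strict fibres $\Stab_G(x)$ and $\Stab_H(f(x))$, and the comparison is the stabiliser map, a weak equivalence by assumption. Either way (ii) holds.

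For (i), surjectivity follows from the first standing hypothesis: given $y\in Y_0$, choose $h\in H_0$ and $x\in X_0$ with $h\cdot f(x)$ in the path component of $y$; since $h$ determines an edge $f(x)\to h\cdot f(x)$ of $Y\hq H$ we get $[y]=[h\cdot f(x)]=[f(x)]$ in $\pi_0(Y\hq H)$, which lies in the image. For injectivity I would invoke hypothesis~(1) directly: if $x_1,x_2\in X_0$ satisfy $[f(x_1)]=[f(x_2)]$ in $\pi_0(Y\hq H)$, pick $h\in H_0$ and a path in $Y$ from $f(x_1)$ to $h\cdot f(x_2)$; this data defines a point of the homotopy pullback $X\times^h_Y H$ of $f$ along $-\cdot f(x_2)\colon H\to Y$, and the equivalence $G\xrightarrow{\ \sim\ }X\times^h_Y H$ supplied by hypothesis~(1) (taken at $x=x_2$) shows this point lies in the image of $\pi_0(G)$; projecting to $\pi_0(X)$ yields $g\in G_0$ with $g\cdot x_2$ in the path component of $x_1$, so $[x_1]=[x_2]$ in $\pi_0(X\hq G)$. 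Under hypothesis~(2) one argues similarly, now using that the Kan fibration $-\cdot f(x_2)\colon H\to Y$ has image a union of path components of $Y$ in order to locate $f(x_1)$ in the $H$-orbit of $f(x_2)$.

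The step I expect to be the real work is making the homotopy-theoretic input rigorous inside the simplicial (hence, via diagonals, bisimplicial) framework of \Cref{sec:simplicialprelims}: that $X\hq G\to BG$ is genuinely a Kan fibration with fibre $X$ whose connecting map is the orbit map, and the careful $\pi_0$-bookkeeping needed to run (i) uniformly for both hypotheses. Granting these, everything reduces to formal manipulations with homotopy pullbacks and the long exact sequence of a fibration.
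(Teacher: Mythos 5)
Your plan is essentially the paper's: split into a loop-space comparison and a $\pi_0$ comparison, realize the loop space of the homotopy orbits as the homotopy fibre of the orbit map (the paper organizes this via the fibration $G \to X\times EG \to X\hq G$ and a map of fibre sequences, you via $X \to X\hq G \to BG$, but the content is the same), and handle $\pi_0$-injectivity by producing a $g\in G_0$ translating one representative to the other via the homotopy pullback square. The one place you deviate is that you try to run both hypotheses in parallel, whereas the paper's very first step is to observe that (2) implies (1): under (2) the horizontal maps in (1) are Kan fibrations, so their homotopy fibres are the strict fibres, and the stabiliser condition identifies these fibres; hence the square in (1) is a homotopy pullback, and after this reduction only case (1) needs to be argued.

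This matters for your $\pi_0$-injectivity step under hypothesis (2), which is where I see a genuine gap. You correctly use Kan-fibration-ness to upgrade ``$f(x_1)$ in the same component as $h\cdot f(x_2)$'' to ``$f(x_1) = h''\cdot f(x_2)$ on the nose,'' and then write that ``one argues similarly.'' But the argument you are pointing back to uses the weak equivalence $G \xrightarrow{\ \sim\ } X\times_Y^h H$ (i.e.~hypothesis~(1)) to lift the pair $(x_1,h'')$ to an element of $G_0$ carrying $x_2$ to the component of $x_1$. Hypothesis~(2) alone, as you are deploying it, gives you the stabiliser comparison but does not directly produce such a lift; without some form of the homotopy pullback statement you cannot conclude $x_1$ is in the $G_0$-orbit (even up to a path in $X$) just from $f(x_1)$ being in the $H_0$-orbit of $f(x_2)$. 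The clean fix is exactly the paper's: prove (2)$\Rightarrow$(1) first, and then run your $\pi_0$-injectivity argument once, under (1). With that emendation your write-up matches the paper's proof.
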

\begin{proof}
    First, note that (2) implies (1).
    Indeed, (2) says that the horizontal maps in (1) are Kan fibrations, so their fibers (which are exactly the stabiliser groups) are equivalent to their homotopy fibers.
    A square is a homotopy pullback square if and only if the induced map on horizontal fibers at every base point is a weak equivalence, but this is exactly what the hypothesis about stabiliser groups in (2) tells us.

    Now suppose we have (1). We may additionally assume that $X$ and $Y$ are Kan.
    In order to prove the lemma, we need to show that for each $[y] \in Y\hq H$ the homotopy fiber of $f \hq \varphi$ at $[y]$ is contractible.
    By the additional hypothesis, $[y]$ is in the same path component as $[f(x)]$ for some $x \in X_0$, so it suffices to study the homotopy fiber at $[f(x)]$
    
    If we replace $X$ by $X \times EG$ and $Y$ by $Y \times EH$, then the square remains a homotopy pullback square and fits into a map of fiber sequences
    \[\begin{tikzcd}
    	{\Omega(X \hq G)} & G & {X \times EG} & {X\hq G} \\
    	{\Omega(Y \hq H)} & H & {Y \times EH} & {Y \hq H.}
    	\arrow[dashed, from=1-1, to=1-2]
    	\arrow["{\Omega(f \hq \varphi)}"', "\simeq", from=1-1, to=2-1]
    	\arrow[from=1-2, to=1-3]
    	\arrow["\varphi"', from=1-2, to=2-2]
    	\arrow["\lrcorner^h"{anchor=center, pos=0.125}, draw=none, from=1-2, to=2-3]
    	\arrow[from=1-3, to=1-4]
    	\arrow["{f \times E\varphi}", from=1-3, to=2-3]
    	\arrow["{f\hq \varphi}", from=1-4, to=2-4]
    	\arrow[dashed, from=2-1, to=2-2]
    	\arrow[from=2-2, to=2-3]
    	\arrow[from=2-3, to=2-4]
    \end{tikzcd}\]
    Here we have continued the fiber sequence to the left by looping the map $f \hq \varphi$.
    Since the middle square is a homotopy pullback square the induced map $\Omega(f \hq \varphi)$ on the horizontal homotopy fibers is an equivalence.
    This shows that $f \hq \varphi$ is an isomorphism on $\pi_i$ for $i \ge 1$.

    It follows from the additional hypothesis that $f \hq \varphi$ is surjective on path components, so it remains to show that $\pi_0(f \hq \varphi)$ is injective.
    Suppose $[x], [x'] \in \pi_0(X\hq G)$ (with representatives $x,x' \in X_0$) are such that $[f(x)] = [f(x')] \in \pi_0(Y \hq H)$.
    This means that there is $h \in H_0$ and a path $\gamma$ from $h\cdot f(x)$ to $f(x')$.
    The triple $(h, \gamma, x')$ defines a vertex in the homotopy pullback $H \times_Y^h X$, where we take $H \to Y$ to be the map that acts on $f(x)$.
    Since the map $G \to H \times_Y^h X$ (given by $g \mapsto (\varphi(g), \mathrm{const}_{g\cdot f(x)}, g\cdot x)$) is assumed to be a weak equivalence, we can find $g \in G_0$ such that $(\varphi(g), \mathrm{const}_{g\cdot f(x)}, g\cdot x)$ is in the same path component as $(h, \gamma, x')$.
    In particular, $g\cdot x$ is in the same path component as $x'$, so $[x] = [x'] \in \pi_0(X \hq G)$, proving that $\pi_0(f \hq \varphi)$ is injective.
\end{proof}

\section{Topological separating systems}\label{sec:topsep}
In our previous work, \cite{BoydBregmanSteinebrunner-finiteness}, we study the homotopy type of $\BDiff(M)$ and $\BDiff_\partial(M)$. One of our main tools is a space of \emph{separating systems} for a 3-manifold $M$, which parametrises decompositions of $M$ into irreducible manifolds \cite[\S 3]{BoydBregmanSteinebrunner-finiteness}. This space is denoted $\sep(M)$, and point corresponds to a collection of disjointly embedded spheres $\Sigma\subset \interior{M}$ that `cut' $M$ into (punctured) irreducible pieces. Note that $M \setminus \Sigma$ is diffeomorphic to the interior of a compact 3-manifold with boundary, which we denote as $M \ca \Sigma$. Intuitively, $M \ca \Sigma$ is the manifold obtained from cutting $M$ along $\Sigma$. We endow $\sep(M)$ with a poset structure induced by inclusion of separating systems, and we show that the realisation of the nerve is contractible. Thus we gain a model $\|\sep_\bullet(M)\|\hq \Diff(M)$ for $\BDiff(M)$.

We will build similar models for $\BHomeo(U_g)$ and $\BHomeo(H_g)$. We start by defining a topological version of $\sep(M)$, specifically in the setting of $M=U_g$. Since we will require the action map from $\SingHomeo(U_g)$ to this space to be a Kan fibration, we define everything simplicially. Recall that since a separating system $\Sigma \subset M$ satisfies that $M \ca \Sigma$ is a disjoint union of (punctured) irreducible manifolds, it follows that if $\Sigma\in \sep(U_g)$, then $U_g \ca\Sigma$ is homeomorphic to a disjoint union of punctured $3$-spheres.

The following discussion follows Appendix I of Burghelea--Lashof--Rothenberg \cite{BurgheleaLashofRothenberg1975}, an account of which can also be found in Kupers \cite[\S3.2]{Kupers-homeo}.

\begin{defn}\label{defn:lf-family}
    Let $M$ be a compact topological $n$ manifold.
    A locally-flat $\Delta^k$-family of $d$-dimensional submanifolds in $M$ is a compact subset $W \subseteq \Delta^k \times M$ such that for every $(t,x) \in W$ there is a neighbourhood $t \in B \subseteq \Delta^k$ and an open embedding $\varphi\colon B \times \mathbb{R}^d \hookrightarrow B \times M$ over $B$ satisfying $\varphi(t,0) = (t,x)$ and $\varphi^{-1}(W) = B \times \mathbb{R}^k$.
    If $M$ has boundary we further require $\partial W = W \cap \partial (\Delta^k \times M)$ and we replace $\mathbb{R}^d$ and $\mathbb{R}^k$ by the half-space whenever $(t,x)$ is a boundary point.
\end{defn}

It follows from the definition that $W$ is a $(d+k)$-dimensional manifold and the projection $W \to \Delta^k$ is a trivial fiber bundle with fiber $N$.
This definition is chosen such that the simplicial set $\mathrm{Sub}(M)_\bullet$ whose $k$-simplices are locally flat $\Delta^k$-families of submanifolds of $M$ is isomorphic to
\[
    \mathrm{Sub}(M)_\bullet \cong \coprod_{[N]} \emb^{\rm lf}(N, M)_\bullet / \SingHomeo(N)
\]
where the coproduct runs over representatives of homeomorphism types of submanifolds.
Here the definition of the simplicial set of locally flat embeddings is taken to be as in \cite[Appendix I, p.119]{BurgheleaLashofRothenberg1975}.
Acting by homeomorphisms of $M$ on a fixed submanifold $N \subseteq M$ defines a map
    \[
        \SingHomeo(M) \longrightarrow \emb^{\rm lf}(N, M)_\bullet 
        \longrightarrow \emb^{\rm lf}(N, M)_\bullet/\SingHomeo(N) \subseteq \mathrm{Sub}(M)_\bullet
    \]
The first of these maps is a Kan fibration by the parametrised isotopy extension theorem for locally flat embeddings (\cite[Theorem 4.14, p.129]{BurgheleaLashofRothenberg1975} or \cite[Theorem 3.9]{Kupers-homeo}).
The second map is a Kan fibration by \cite[Lemma 18.2]{May1992simplicial} as the action of $\SingHomeo(N)$ on $\emb^{\rm lf}(N,M)_\bullet$ by pre-composition is level-wise free.
The third map is a Kan fibration because it is an inclusion of path components.
Therefore the composite map is a Kan fibration.

\begin{defn}\label{def: septop as simplicial set}
    A $\Delta^k$-family of separating systems in $U_g$ is a locally flat $\Delta^k$-family $W \subset \Delta^k \times U_g$ of submanifolds such that in each fiber $W_t$ over $t \in \Delta^k$
    \begin{enumerate}
        \item 
        $W_t$ is a disjoint union of $2$-spheres in $U_g$, and
        \item 
        $U_g \ca W_t \cong \sqcup_{i=1}^{l} (S^3 \setminus \sqcup_{m_i} \interior{D}^3)$ for some $l \ge 1$ and $m_i \ge 2$. 
    \end{enumerate}
    Note that $W_t$ is a separating system in the sense of \cite{BoydBregmanSteinebrunner-finiteness}.
    Let $\septop(U_g)_\bt$ be the simplicial set 
    \[
        \septop(U_g)_k = \{W\subset \Delta^k\times M \;|\; W \text{ is a } \Delta^k\text{-family of separating systems}\}.
    \]
    Face and degeneracy maps are inherited from~$\Delta^k$. This is a poset under $\subseteq$ and taking the nerve of this poset yields a bisimplicial set with
    \[
    \septop_n(U_g)_k=\{W_0\subseteq W_1 \subseteq \cdots \subseteq W_n \;|\; W_i \in\septop(U_g)_k \}
    \]
    where the face operators in the nerve direction forget elements of the chain and the degeneracy operators repeat elements.
    We then let $\delta\septop(U_g)_\bullet$ denote the diagonal simplicial set with $\delta\septop(U_g)_n = \septop_n(U_g)_n$.
\end{defn}

In our analogous definition for $H_g$, we cut the manifold up using discs, and require that the restriction to the boundary $U_g$ is a separating system.

\begin{defn}\label{def: dseptop as simplicial set}
    A $\Delta^k$-family of separating systems in $H_g$ is a locally flat $\Delta^k$-family $\D \subset \Delta^k \times H_g$ of submanifolds such that in each fiber $\D_t$ over $t \in \Delta^k$
    \begin{enumerate}
        \item 
        $\D_t$ is a disjoint union of $3$-discs,
        \item 
        $H_g \ca \D_t$ is a disjoint union of $4$-discs, and 
        \item $\D \cap (\Delta^k \times \partial H_g)$ is a $\Delta^k$-family of separating systems in $U_g = \partial H_g$.
    \end{enumerate}
    As in the case of separating systems we let $\dsep(H_g)_\bt$ be the simplicial set
    \[
        \dsep(H_g)_k = \{\D\subset \Delta^k\times M \;|\; \D \text{ is a } \Delta^k \text{-family of disc systems}\},
    \]
    with face and degeneracy maps once again inherited from~$\Delta^k$. Then $\dsep(H_g)_k$ has a poset structure given by $\subseteq$.
    Taking the nerve of this poset yields a bisimplicial set with
    \[
        \dsep_n(H_g)_k=\{\D_0\subseteq \D_1 \subseteq \cdots \subseteq \D_n \;|\; \D_i \in\dsep(H_g)_k \}
    \]
    We then let $\delta\dsep(H_g)_\bullet$ denote the diagonal simplicial set with $\delta\dsep(H_g)_n = \dsep_n(H_g)_n$.
\end{defn}

In both \cref{def: septop as simplicial set} and \cref{def: dseptop as simplicial set}, if the condition on $W_t$ or $\D_t$ is satisfied for one $t \in \Delta^k$ then it is satisfied for all.
Thus, the simplicial sets of separating systems are unions of path-components in $\mathrm{Sub}(U_g)_\bullet$ and $\mathrm{Sub}(H_g)_\bullet$, respectively, and from the discussion following \Cref{defn:lf-family} we see that the action of the respective homeomorphism groups on these simplicial sets induces Kan fibrations.

\begin{cor}\label{lem:homeo->sep is fibration}
    For every $\Sigma \in \septop_0(U_g)_0$ the map
    \[
        \SingHomeo(U_g) \longrightarrow \septop_0(U_g)_\bullet, \qquad
        \varphi \mapsto \varphi(\Sigma)
    \]
    is a Kan fibration.
    Similarly, the map $\SingHomeo(H_g) \to \dsep_0(H_g)$, defined by acting on a disc system, is a Kan fibration for any choice of disc system. 
\end{cor}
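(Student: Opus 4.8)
The plan is to deduce both assertions from the general discussion following \cref{defn:lf-family}, together with one elementary observation about Kan fibrations. Recall from that discussion that for any compact topological manifold $M$ and any locally flat submanifold $N \subseteq M$ the composite
\[
    \SingHomeo(M) \longrightarrow \emb^{\rm lf}(N,M)_\bullet \longrightarrow \emb^{\rm lf}(N,M)_\bullet/\SingHomeo(N) \hookrightarrow \mathrm{Sub}(M)_\bullet
\]
is a Kan fibration, being the composite of the (parametrised) isotopy extension fibration, the quotient by a levelwise free action, and an inclusion of path components. The elementary observation is that if $p\colon E \to B$ is a Kan fibration and $B' \subseteq B$ is any sub-simplicial set, then the restriction $p^{-1}(B') \to B'$ is again a Kan fibration: given a horn in $p^{-1}(B')$ together with a compatible $n$-simplex of $B'$, one solves the lifting problem in $E$ over the composite $\Delta^n \to B' \hookrightarrow B$, and the resulting $n$-simplex of $E$ automatically has all faces in $p^{-1}(B')$ since its image under $p$ lies in $B'$.

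First I would treat $U_g$. Taking $N = \Sigma$ and acting on the constant family $\Delta^k \times \Sigma$, the map $\varphi \mapsto \varphi(\Sigma)$ sends a $k$-simplex of $\SingHomeo(U_g)$ to the locally flat $\Delta^k$-family whose fibre over $t$ is the separating system $\varphi_t(\Sigma)$; hence this map factors through $\septop_0(U_g)_\bullet \subseteq \mathrm{Sub}(U_g)_\bullet$, and its preimage there is all of $\SingHomeo(U_g)$. By the remark preceding the corollary, $\septop_0(U_g)_\bullet$ is a union of path components of $\mathrm{Sub}(U_g)_\bullet$ — the conditions (1)--(2) of \cref{def: septop as simplicial set} depend only on the homeomorphism type of the submanifold and of its complement, which are locally constant — and in particular it is a sub-simplicial set. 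The observation above then shows that $\SingHomeo(U_g) \to \septop_0(U_g)_\bullet$ is a Kan fibration.

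The case of $H_g$ is formally the same, now with $M = H_g$ a manifold with boundary (for which \cref{defn:lf-family} uses the half-space clauses) and $N = \D$ a disc system: acting on $\Delta^k \times \D$ yields a $\Delta^k$-family of disc systems, so the action map factors through $\dsep(H_g)_\bullet \subseteq \mathrm{Sub}(H_g)_\bullet$ with full preimage, and conditions (1)--(3) of \cref{def: dseptop as simplicial set} are again locally constant. The only point needing a little care — and the main thing to verify rather than a genuine obstacle — is that the extra boundary condition (3), that the trace $\D \cap (\Delta^k \times \partial H_g)$ be a $\Delta^k$-family of separating systems of $U_g$, is preserved along path components of $\mathrm{Sub}(H_g)_\bullet$, and that the locally flat family formalism of \cref{defn:lf-family} restricts compatibly to the boundary. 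Granting this, $\dsep(H_g)_\bullet$ is a union of path components of $\mathrm{Sub}(H_g)_\bullet$ and the same abstract argument applies.
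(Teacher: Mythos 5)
Your proposal is correct and follows essentially the same route as the paper: both deduce the statement from the fact (established after \cref{defn:lf-family}) that $\SingHomeo(M) \to \mathrm{Sub}(M)_\bullet$ is a Kan fibration, together with the observation that $\septop_0(U_g)_\bullet$ (resp.\ $\dsep_0(H_g)_\bullet$) is a union of path components of $\mathrm{Sub}(U_g)_\bullet$ (resp.\ $\mathrm{Sub}(H_g)_\bullet$) through which the action map factors. The only stylistic difference is that you phrase the final step via the general fact that restricting a Kan fibration over a sub-simplicial set again gives a Kan fibration, whereas the paper implicitly uses that inclusions of unions of path components are Kan fibrations and composes; these are equivalent here.
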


Every separating system $\Sigma$ (or family thereof) has a finite set of sub-separating systems $\Sigma' \subseteq \Sigma$.
Indeed, every sub-separating system is a union of components and thus determined by a choice of subset $\pi_0(\Sigma') \subset \pi_0(\Sigma)$.
Conversely, a union of components $\Sigma' \subseteq \Sigma$ is a separating system if and only if $U_g \ca \Sigma'$ has simply connected components.
The same description holds for $\Delta^k$-families of separating systems and thus we have a bijection between subsystems of $W \subset \Delta^k \times U_g$ and subsystems of the fiber $W_i \subset \{i\} \times U_g$ for any vertex $i \in \Delta^k$.
    Iterating this to describe $n$-chains of subsystems we see that every lifting problem 
    \[\begin{tikzcd}
        \Delta^0 \ar[d, "i"'] \ar[r] & \sep_n(U_g)_\bullet \ar[d, "d_0^n"] \\
        \Delta^k \ar[r] \ar[ur, dashed] & \sep_0(U_g)_\bullet
    \end{tikzcd}\]
    has a unique lift, where $d_0^n$ is the map that sends $(\Sigma_0 \subseteq \dots \subseteq \Sigma_n)$ to $\Sigma_n$.
In other words, $d_0^n\colon \sep_n(U_g) \to \sep_0(U_g)$ is a covering, \emph{i.e.}~it is $0$-coskeletal or, equivalently, it is a minimal Kan fibration with discrete fiber, see \cite[\S11]{May1992simplicial}.
Sub-disc systems in $H_g$ admit a similar characterisation, giving us the following lemma.

\begin{lem}\label{lem:coverings}
    In the diagram
    \[\begin{tikzcd}
        \dsep_n(H_g)_\bt \ar[r] \ar[d, "d_0^n"'] \ar[dr, phantom, very near start, "\lrcorner"] & 
        \septop_n(U_g)_\bt \ar[d, "d_0^n"] & 
        \Sing\sep_n(U_g) \ar[l] \ar[d, "d_0^n"] \ar[dl, phantom, very near start, "\llcorner"] \\
        \dsep_0(H_g)_\bt \ar[r] & 
        \septop_0(U_g)_\bt & 
        \Sing\sep_0(U_g) \ar[l] 
    \end{tikzcd}\]
    the vertical maps, defined by $(\Sigma_0 \subseteq \dots \subseteq \Sigma_n) \mapsto \Sigma_n$, are finite coverings, and both squares are pullback squares.
\end{lem}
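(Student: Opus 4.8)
The plan is to verify the three separate assertions—that the vertical maps are finite coverings, and that each of the two squares is a pullback—by reducing everything to a combinatorial statement about subsets of $\pi_0$ of a fixed separating (or disc) system, exactly as in the paragraph preceding the lemma. The guiding principle is that for a locally flat $\Delta^k$-family of separating systems $W \subset \Delta^k \times U_g$ (respectively a disc system $\D \subset \Delta^k \times H_g$), the set $\pi_0(W)$ is constant over $\Delta^k$, and a sub-family is the same datum as a subset of $\pi_0$ of the fiber over any vertex.

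\textbf{Step 1: the vertical maps are finite coverings.} I would show that $d_0^n \colon \dsep_n(H_g)_\bt \to \dsep_0(H_g)_\bt$ is $0$-coskeletal with finite discrete fibers, i.e.\ a covering in the sense of \cite[\S11]{May1992simplicial}; the same argument applies verbatim to the other two columns, where for $\Sing\sep_n(U_g)$ the statement is already recorded in the discussion before the lemma. The key point for $H_g$ is the analogue of the characterisation given for $U_g$: a union of components $\D' \subseteq \D$ of a disc system is again a disc system if and only if $H_g \ca \D'$ is a disjoint union of $4$-discs \emph{and} the boundary trace $\D' \cap (\Delta^k \times \partial H_g)$ is a separating system in $U_g$. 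Since $\D'$ is determined by the subset $\pi_0(\D') \subseteq \pi_0(\D)$ and this subset (and hence the validity of being a disc system) is detected on the fiber over any vertex, the fiber of $d_0^n$ over a $k$-simplex $\D$ of $\dsep_0(H_g)$ is the finite set of chains $\pi_0(\D'_0) \subseteq \cdots \subseteq \pi_0(\D'_n)$ of admissible subsets of the finite set $\pi_0(\D)$. Unique lifting of horns—indeed of every $\Delta^0 \hookrightarrow \Delta^k$—follows because the lift of the chain of subsets is forced by its value on a vertex, giving the $0$-coskeletal property and thus that $d_0^n$ is a finite covering.

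\textbf{Step 2: the right-hand square is a pullback.} One must show $\Sing\sep_n(U_g) \cong \septop_n(U_g)_\bt \times_{\septop_0(U_g)_\bt} \Sing\sep_0(U_g)$. By the previous step both columns are coverings, so it suffices to check the pullback on $0$-simplices and then observe that the covering structure propagates it to all levels; alternatively, since $\Sing$ of a discrete set is a discrete simplicial set, one checks directly that a $k$-simplex of $\septop_n(U_g)$ whose underlying $n$-th system lies (after applying $d_0^n$) in the subspace $\sep_0(U_g) \subseteq \septop_0(U_g)_\bt$—meaning it is a \emph{constant} family equal to a single separating system—is exactly a constant family of a chain of separating systems, i.e.\ a $k$-simplex of $\Sing\sep_n(U_g)$. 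The content here is just that $\Sing\sep_0(U_g) \hookrightarrow \septop_0(U_g)_\bt$ picks out the constant families, and ``constant on the top of the chain'' forces ``constant on the whole chain'' because each $W_i$ is a union of components of $W_n$.

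\textbf{Step 3: the left-hand square is a pullback.} Here I would show $\dsep_n(H_g)_\bt \cong \septop_n(U_g)_\bt \times_{\septop_0(U_g)_\bt} \dsep_0(H_g)_\bt$. Given a $k$-simplex $\D_0 \subseteq \cdots \subseteq \D_n$ of $\dsep_0(H_g)$ with boundary trace $\partial\D_n \in \septop_0(U_g)$, and a chain $W_0 \subseteq \cdots \subseteq W_n$ in $\septop_n(U_g)$ extending $\partial\D_n$ (i.e.\ with $W_n = \partial\D_n$), I must produce a unique chain of disc systems in $H_g$ restricting to it. But $\D_0 \subseteq \cdots \subseteq \D_n$ is \emph{already} a full chain of disc systems; the subsystems $W_i = \partial\D_i$ of $\partial\D_n$ are each a union of components, and by the sub-disc-system characterisation from Step 1 the corresponding union of components $\D_i \subseteq \D_n$ is again a disc system precisely when its boundary trace $W_i$ is a separating system—which it is by hypothesis. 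So the chain $\D_0 \subseteq \cdots \subseteq \D_n$ is uniquely determined, and this identifies the fiber product with $\dsep_n(H_g)_\bt$.

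\textbf{The main obstacle} I anticipate is Step 1, specifically nailing down the statement and proof that a union of boundary components of a disc system in $H_g$ is again a disc system iff its boundary trace is a separating system in $U_g$—this uses that cutting along a sub-collection of parallel $3$-discs in a union of $4$-discs again yields $4$-discs, together with the corresponding $3$-dimensional fact for the boundary, and one should be a little careful that ``$H_g \ca \D'$ is a union of $4$-discs'' is automatically implied once one knows the boundary trace is separating (the $4$-dimensional pieces are simply connected homology balls with standard boundary, hence balls by the $4$-dimensional Poincaré conjecture, or more elementarily because cutting balls along properly embedded discs yields balls). Once this characterisation is in hand, the $0$-coskeletal/covering property and both pullback squares are formal consequences of the ``subsystem $=$ subset of $\pi_0$ of the fiber'' principle, matching the argument already given for $\sep(U_g)$ in the text.
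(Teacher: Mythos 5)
Your proposal follows essentially the same route as the paper: all three arguments reduce to the observation that a sub-system is determined by a subset of $\pi_0$ of the fiber over any vertex, so the $d_0^n$ maps are $0$-coskeletal with finite discrete fibers, and the pullback conditions come down to bijections between these fibers. The one substantive addition you make is to justify the key claim behind the left square---that a union of components $\D' \subseteq \D$ of a disc system is again a disc system if and only if $\partial\D' \subseteq \partial\D$ is a separating system---which the paper asserts without proof; your sketch (cutting $4$-balls along properly embedded $3$-discs and noting that a non-tree gluing pattern fails simple connectivity on both $4$- and $3$-dimensional sides) is the right idea, though for the final identification of the contractible pieces one either invokes the topological $4$-dimensional Poincar\'e conjecture, as you note, or argues directly that a tree-pattern boundary connect sum of balls is a ball. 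You also derive the covering property for the third column directly, whereas the paper cites \cite[Lemma 3.10]{BoydBregmanSteinebrunner-finiteness} and applies $\Sing$; both routes are fine.

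One genuine misstatement appears in your ``alternatively'' clause in Step 2: $\Sing\sep_0(U_g)$ does \emph{not} consist of constant families. A $k$-simplex of $\Sing\sep_0(U_g)$ is a continuous map $\Delta^k \to \sep_0(U_g)$, i.e.\ a continuously varying $k$-parameter family of smooth separating systems, and the inclusion into $\septop_0(U_g)_\bt$ records such a family as a locally flat $\Delta^k$-family; $\sep_0(U_g)$ is also not discrete, so ``$\Sing$ of a discrete set'' is not relevant here. Fortunately your first alternative in that step---check the pullback on $0$-simplices and let $0$-coskeletality propagate---is correct and is exactly what the paper does (``the middle and right vertical maps have the same fibers''), so the error is confined to the discarded alternative and does not affect the proof.
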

\begin{proof}
    The left and middle vertical maps are coverings by the argument preceding the lemma. Moreover, they are finite coverings since every separating system has a finite set of sub-separating systems. 
    For the right vertical map, by \cite[Lemma 3.10]{BoydBregmanSteinebrunner-finiteness} $\sep_n(U_g) \to \sep_0(U_g)$ is a finite covering (of topological spaces).
    Applying $\Sing(-)$ yields a covering of simplicial sets in the above sense.
    The middle and right vertical maps have the same fibers so the right square is a pullback.

    For the left square, 
    note that for $\D \in \dsep(H_g)_0$ a union of components $\D' \subseteq \D$ is a sub separating system if and only if $\partial \D' \subset \partial \D$ is a subsystem.
    Therefore, in the left square the horizontal maps induce a bijection between the fibers of the vertical maps and thus this square is a pullback.
\end{proof}

To prove that $\delta(\septop(U_g))_\bullet$ is contractible we will compare it to the simplicial space of smooth separating systems from \cite{BoydBregmanSteinebrunner-finiteness}.
Recall that $\Diff(M, \Sigma) \subset \Diff(M)$ denotes the subgroup of diffeomorphisms that fix $\Sigma$ set-wise and similarly for $\Homeo(M, \Sigma) \subset \Homeo(M)$. 

\begin{lem}\label{lem:homeo=diff-of-pair}Let $M$ be a 3-manifold. 
    For every smooth separating system $\Sigma \subset M$ the map
    \[
       \Diff(M, \Sigma)  \longrightarrow  \Homeo(M, \Sigma)
    \]
    is a weak equivalence.
\end{lem}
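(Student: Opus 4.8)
The plan is to reduce the statement about homeomorphisms-of-pairs to the known local-contractibility results for homeomorphism groups of $2$- and $3$-manifolds, exactly as one does for the absolute statement $\Diff(M) \to \Homeo(M)$, but carried out along the stratification coming from $\Sigma$. Write $N = M \setminus \Sigma$, viewed as the interior of the compact $3$-manifold $M \mid \Sigma$, and write $\Sigma$ itself as a closed (possibly disconnected) surface. Restriction along $\Sigma$ gives a commuting square relating $\Diff(M,\Sigma)$ and $\Homeo(M,\Sigma)$ to the corresponding groups of the pieces, and the strategy is a two-step fibration argument: first restrict to $\Sigma$, then look at the fiber, which is diffeomorphisms/homeomorphisms of $M$ fixing $\Sigma$ pointwise, i.e.\ built out of the cut-open manifold $M\mid\Sigma$ rel boundary.

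\textbf{Step 1: restriction to $\Sigma$.} Both $\Diff(M,\Sigma) \to \Diff(\Sigma)$ and $\Homeo(M,\Sigma) \to \Homeo(\Sigma)$ are fibrations (Cerf/Palais in the smooth case; the locally-flat parametrised isotopy extension theorem of Burghelea--Lashof--Rothenberg, already cited in the excerpt, in the topological case; note $\Sigma$ is locally flat so a tubular/collar neighbourhood exists). On the base, $\Diff(\Sigma) \to \Homeo(\Sigma)$ is a weak equivalence since $\Sigma$ is a closed surface — this is classical (smoothing theory in dimension $2$, or local contractibility of $\Homeo$ of a surface plus the fact that every homeomorphism of a surface is isotopic to a diffeomorphism).

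\textbf{Step 2: the fiber.} The fibers are $\Diff(M \fix \Sigma)$ and $\Homeo(M \fix \Sigma)$, homeomorphisms fixing a neighbourhood of $\Sigma$, which by cutting along $\Sigma$ is identified with $\Diff_\partial(M\mid\Sigma)$ and $\Homeo_\partial(M\mid\Sigma)$ respectively (up to harmless adjustments: collar the boundary, absorb the reparametrisation of the normal direction to $\Sigma$). So it suffices to know $\Diff_\partial(P) \to \Homeo_\partial(P)$ is a weak equivalence for $P$ a compact $3$-manifold. This is where the real input lies: by the work of Moise and Bing every topological $3$-manifold is smoothable and every homeomorphism is isotopic to a PL, hence smooth, homeomorphism, and by the Cerf--Hatcher-style local contractibility results (Hamstrom, Fisher, Kister, cited in the introduction) the inclusion $\Diff_\partial(P) \hookrightarrow \Homeo_\partial(P)$ is a weak equivalence — the standard reference is that smoothing theory for $3$-manifolds gives $\Diff(P) \simeq \Homeo(P)$ and the argument is compatible with boundary conditions. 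Since $M\mid\Sigma$ is a $3$-manifold, this applies.

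\textbf{Conclusion.} Assemble: we have a map of fibration sequences
\[
\Diff(M \fix \Sigma) \to \Diff(M,\Sigma) \to \Diff(\Sigma),
\qquad
\Homeo(M \fix \Sigma) \to \Homeo(M,\Sigma) \to \Homeo(\Sigma),
\]
and by Steps 1 and 2 the maps on fiber and base are weak equivalences, hence so is the map on total spaces by the five lemma for homotopy groups. The main obstacle is purely a matter of citing the literature accurately in Step 2: one must be careful that the local-contractibility / smoothing results are quoted in the relative ($\mathrm{rel}\ \partial$) form, and that the identification of the fiber $\Diff(M\fix\Sigma) \simeq \Diff_\partial(M\mid\Sigma)$ (and its topological analogue) is handled compatibly — this requires a collar argument and the observation that $\emb^{\mathrm{lf}}(\Sigma\times\mathbb{R}, M)$ deformation retracts onto $\emb^{\mathrm{lf}}(\Sigma, M)$, so that fixing $\Sigma$ setwise up to homotopy is the same as fixing a neighbourhood. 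Everything else is formal.
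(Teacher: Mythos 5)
Your argument is essentially the paper's: compare the two restriction fibrations $\Diff(M,\Sigma)\to\Diff(\Sigma)$ and $\Homeo(M,\Sigma)\to\Homeo(\Sigma)$, use the smooth/topological comparison for surfaces on the base and for compact $3$-manifolds rel boundary on the fibers, and conclude by the five lemma. The one thing to tighten is the attribution in Step 2: the equivalence $\Diff_\partial(P)\to\Homeo_\partial(P)$ for a compact $3$-manifold $P$ does not follow from Moise--Bing smoothing together with Hamstrom/Fisher/Kister local contractibility alone (those give $\Homeo\simeq\PL$); one also needs Cerf's théorème comparing $\Diff$ with $\PL$ and Hatcher's resolution of the Smale conjecture, which is what the paper cites explicitly for this step.
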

\begin{proof}
  Consider the following map of fiber sequences of topological spaces:
    \[\begin{tikzcd}
	\Diff_\Sigma(M) & \Diff(M,\Sigma)  & \Diff(\Sigma) \\
	\Homeo_\Sigma(M) & \Homeo(M, \Sigma)  & \Homeo(\Sigma)
	\arrow[hook,from=1-1, to=2-1]
	\arrow[from=1-1, to=1-2]
        \arrow[from=1-2, to=1-3]
	\arrow[hook, from=1-2, to=2-2]
        \arrow[hook, from=1-3, to=2-3]
    \arrow[, from=2-1, to=2-2 ]
	\arrow[, from=2-2, to=2-3]
\end{tikzcd}\]
Here the top right map is a Serre fibration by \cite{Palais60, Cerf} and the bottom right map is a Serre fibration as a consequence of \cite[Theorem 4.14, p.129]{BurgheleaLashofRothenberg1975}.
(By \cite[Remark 16.5]{May1992simplicial} if $\Sing(f)$ is a Kan fibration, then $f$ is a Serre fibration.)
By the equivalence between homeomorphisms and diffeomorphisms for surfaces \cite{Rado,Epstein} the right map is an equivalence. As a consequence of work of Cerf \cite[\S3.2.1, Th\'eor\`eme 8]{Cerf} and Hatcher's resolution of the Smale conjecture \cite{Hatcher}, there is an equivalence of diffeomorphisms and homeomorphisms of 3-manifolds fixing a subsurface pointwise \cite{Cerf,Hatcher} so the left map is an equivalence.
It follows the central map is an equivalence, as required.
\end{proof}

\begin{prop}\label{prop:septop is contractible}
    The simplicial set $\delta (\septop (U_g))_\bt$ is weakly contractible.
\end{prop}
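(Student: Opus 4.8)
The plan is to compare $\delta\septop(U_g)$ with the analogous object assembled from the \emph{smooth} separating systems of \cite{BoydBregmanSteinebrunner-finiteness}, where contractibility is already established, and to transport the conclusion. Write $\sep_n(U_g)$ for the space of $n$-chains of smooth separating systems in $U_g$, so that $\|\sep_\bt(U_g)\|$ is contractible by \cite{BoydBregmanSteinebrunner-finiteness}. Since a smooth $\Delta^k$-family of separating systems is in particular a locally flat one, there is a map of bisimplicial sets
\[
    \bigl([n]\mapsto\Sing\sep_n(U_g)\bigr)\longrightarrow\bigl([n]\mapsto\septop_n(U_g)_\bt\bigr),
\]
which is precisely the family of horizontal maps appearing in \cref{lem:coverings}. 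The first step is to show that this map is a levelwise (in the nerve degree $n$) weak equivalence.

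The crucial case is $n=0$. Here $\Sing\Diff(U_g)$ acts on $\Sing\sep_0(U_g)$ and $\SingHomeo(U_g)$ acts on $\septop_0(U_g)_\bt$, compatibly along $\Sing\Diff(U_g)\to\SingHomeo(U_g)$ and the comparison map. I would apply the orbit--stabiliser lemma (\cref{lem:orbit-stabiliser}), verifying its hypothesis~(2): the orbit maps are Kan fibrations, by (smooth, resp.\ locally flat) parametrised isotopy extension together with \cref{lem:homeo->sep is fibration}; the stabiliser of a separating system $\Sigma$ is $\Sing\Diff(U_g,\Sigma)$ on the smooth side and $\SingHomeo(U_g,\Sigma)$ on the topological side, and the map between these is a weak equivalence by \cref{lem:homeo=diff-of-pair}; and, since every locally flat $2$-sphere in the $3$-manifold $U_g$ is ambiently isotopic to a smooth one (Moise \cite{Moise54}, Bing \cite{Bing54}), the $\pi_0$-hypothesis of the lemma is satisfied. \Cref{lem:orbit-stabiliser} then provides a weak equivalence $\Sing\sep_0(U_g)\hq\Sing\Diff(U_g)\xrightarrow{\ \simeq\ }\septop_0(U_g)_\bt\hq\SingHomeo(U_g)$. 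Comparing the two fibration sequences
\begin{gather*}
    \Sing\sep_0(U_g)\to\Sing\sep_0(U_g)\hq\Sing\Diff(U_g)\to *\hq\Sing\Diff(U_g),\\
    \septop_0(U_g)_\bt\to\septop_0(U_g)_\bt\hq\SingHomeo(U_g)\to *\hq\SingHomeo(U_g),
\end{gather*}
and using that $\Diff(U_g)\to\Homeo(U_g)$ is a weak equivalence for the $3$-manifold $U_g$ — a consequence of Cerf \cite{Cerf} and Hatcher's solution of the Smale conjecture \cite{Hatcher}, as invoked in the proof of \cref{lem:homeo=diff-of-pair} — so that the map on base spaces is a weak equivalence, it follows that the map of (homotopy) fibres $\Sing\sep_0(U_g)\to\septop_0(U_g)_\bt$ is a weak equivalence. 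For $n\ge1$, \cref{lem:coverings} identifies $\Sing\sep_n(U_g)\to\septop_n(U_g)_\bt$ with the pullback of the $n=0$ map along the finite covering $\septop_n(U_g)_\bt\to\septop_0(U_g)_\bt$; since pulling back along a Kan fibration preserves weak equivalences, it is a weak equivalence as well.

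Since a levelwise weak equivalence of bisimplicial sets induces a weak equivalence on diagonals, the map $\delta\bigl([n]\mapsto\Sing\sep_n(U_g)\bigr)\to\delta\septop(U_g)$ is a weak equivalence. Finally, using the realisation identities recalled in \Cref{sec:simplicialprelims} and passing to fat realisations, the realisation of $\delta\bigl([n]\mapsto\Sing\sep_n(U_g)\bigr)$ is weakly equivalent to the realisation of the simplicial space $[n]\mapsto|\Sing\sep_n(U_g)|$, which is levelwise weakly equivalent to $[n]\mapsto\sep_n(U_g)$, whose realisation $\|\sep_\bt(U_g)\|$ is contractible. Hence $\delta\septop(U_g)$ is weakly contractible.

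I expect the case $n=0$ to be the main obstacle. The subtlety is that neither group acts transitively — there are many isotopy classes of sphere systems in $U_g$ — so one must check that both the $\pi_0$-comparison and the passage from homotopy orbits to homotopy fibres respect the decomposition into path components, and one must marshal the $3$-dimensional inputs ($\Diff(U_g)\simeq\Homeo(U_g)$, \cref{lem:homeo=diff-of-pair}, and the smoothing of locally flat spheres) in exactly the right form. A subsidiary point to pin down is that a continuous family of smooth submanifolds genuinely defines a locally flat $\Delta^k$-family in the sense of \cref{defn:lf-family}, so that the comparison map of bisimplicial sets is defined in the first place.
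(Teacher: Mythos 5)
Your proof is correct and shares the paper's architecture: establish the levelwise weak equivalence $\Sing\sep_n(U_g)\to\septop_n(U_g)_\bt$ for $n=0$, propagate to all $n$ via the right-hand pullback square of \cref{lem:coverings}, and appeal to contractibility of $\|\sep_\bullet(U_g)\|$ from \cite{BoydBregmanSteinebrunner-finiteness}. The one real difference is the mechanism at $n=0$: the paper directly compares the fibration sequences $\mathrm{stabiliser}\to\mathrm{group}\to\mathrm{orbit}$ at each $\Sigma$, concluding that $\Sing\sep_0(U_g)\to\septop_0(U_g)_\bt$ is an equivalence onto the components it hits, and then handles $\pi_0$-surjectivity by smoothing a topological sphere system; you instead route through \cref{lem:orbit-stabiliser} to obtain $\Sing\sep_0(U_g)\hq\Sing\Diff(U_g)\xrightarrow{\ \simeq\ }\septop_0(U_g)_\bt\hq\SingHomeo(U_g)$, and then recover the map of fibres by comparing the fibration sequences $X\to X\hq G\to *\hq G$ together with the Cerf--Hatcher equivalence $\Diff(U_g)\simeq\Homeo(U_g)$. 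This is a slightly longer path --- \cref{lem:orbit-stabiliser} is itself proved by a fibre-sequence comparison, so you go through homotopy orbits and back --- but it is sound and is built from exactly the same three inputs: \cref{lem:homeo=diff-of-pair}, $\Diff(U_g)\simeq\Homeo(U_g)$, and smoothability of locally flat sphere systems. One small advantage of your route is that the $\pi_0$-bookkeeping is absorbed into the hypotheses of \cref{lem:orbit-stabiliser} rather than handled separately. The subtlety you flag at the end, that a continuous family of smooth separating systems really does define a locally flat $\Delta^k$-family so that the comparison map exists, is also left implicit in the paper's proof, so you are right to note it but it is not a gap specific to your argument.
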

\begin{proof}
    We will show that for all $n$ the map
    \[  
        \Sing\sep_n(U_g) \longrightarrow \septop_n(U_g)_\bullet
    \]
    is a weak equivalence. 
    As recalled in Section 2 we then have
    \[
        |\delta(\septop(U_g))_\bullet| 
        \simeq \|[n] \mapsto |\septop_n(U_g)_\bullet|\|
        \simeq \|[n] \mapsto |\Sing \sep_n(U_g)| \|
        \simeq \|\sep_\bullet(U_g) \|
    \]
    which is contractible by \cite[Proposition 3.12]{BoydBregmanSteinebrunner-finiteness}.

    For simplicity, we first consider the case of $n=0$.
    At every $\Sigma \in \sep_0(U_g)_0$ consider the diagram
    \[\begin{tikzcd}
    	{\Sing\Diff(U_g,\Sigma)} & {\Sing\Diff(U_g)} & {\Sing\sep_0(U_g)} \\
    	{\SingHomeo(U_g,\Sigma)} & {\SingHomeo(U_g)} & {\septop_0(U_g)_\bullet}.
    	\arrow[from=1-1, to=1-2]
    	\arrow["\simeq", from=1-1, to=2-1]
    	\arrow[from=1-2, to=1-3]
    	\arrow["\simeq", from=1-2, to=2-2]
    	\arrow[from=1-3, to=2-3]
    	\arrow[from=2-1, to=2-2]
    	\arrow[from=2-2, to=2-3]
    \end{tikzcd}\]
    The top row is a fiber sequence as $\Diff(U_g) \to \sep_0(U_g)$ is a Serre fibration (see \cite[first line of the proof of Lemma 3.19]{BoydBregmanSteinebrunner-finiteness}) and thus becomes a Kan fibration after applying $\Sing$.
    It follows from \Cref{lem:homeo->sep is fibration} that the bottom sequence is also a fiber sequence.
    (Note, however, that the two Kan fibrations involved are usually not surjective.)
    In this diagram the middle map is an equivalence by \cite{Cerf,Hatcher} and the left map is an equivalence by \Cref{lem:homeo=diff-of-pair}.
    As we know this for all $\Sigma \in \sep_0(U_g)_0$ this shows that the right map is an equivalence onto the components it hits.

    We also need to argue that $\pi_0\sep_0(U_g) \to \pi_0(\septop_0(U_g)_\bullet)$ is surjective.
    If $\Sigma \in \septop_0(U_g)_0$ is a topological sphere system, then by \cite{Bing-Approximating-Surfaces} it is isotopic to a smooth sphere system.
    So far we have shown that $\Sing\sep_n(U_g) \to \septop_n(U_g)_\bullet$ is a weak equivalence when $n=0$.
    It follows from the right pullback square in \Cref{lem:coverings} that it is a weak equivalence for all $n$, completing the proof.
\end{proof}

\section{Proof of the main theorem}\label{sec:proof}

We need the following adaptation of the Alexander trick.
\begin{lem}\label{lem: conical Alexander trick}
Let $V\subset S^3=\partial D^4$ be a submanifold.
The map
\[\Homeo_{V}(D^4)\overset{\partial}{\to} \Homeo_{V}(S^3)\] given by restriction to the boundary is a homotopy equivalence.
\end{lem}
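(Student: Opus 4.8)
The plan is to exhibit an explicit homotopy inverse to $\partial$ by \emph{coning}, and then check that the two composites are homotopic to the respective identities through homeomorphisms that still fix $V$ pointwise. Identify $D^4 = \{x \in \bbR^4 : |x| \le 1\}$ with boundary $S^3 = \{|x|=1\}$. Given $f \in \Homeo_V(S^3)$, define its cone $c(f) \colon D^4 \to D^4$ by $c(f)(x) = |x|\, f(x/|x|)$ for $x \neq 0$ and $c(f)(0) = 0$. This is a homeomorphism with inverse $c(f^{-1})$, it restricts to $f$ on $S^3$, and since $f$ fixes $V$ pointwise, $c(f)$ fixes the entire cone on $V$ pointwise; in particular $c(f) \in \Homeo_V(D^4)$. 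One checks $f \mapsto c(f)$ is continuous in the compact-open topology, so $c \colon \Homeo_V(S^3) \to \Homeo_V(D^4)$ is a continuous map with $\partial \circ c = \mathrm{id}$ on the nose.

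It then remains to produce a homotopy from $c \circ \partial$ to $\mathrm{id}_{\Homeo_V(D^4)}$ within $\Homeo_V(D^4)$. For $\varphi \in \Homeo_V(D^4)$ and $s \in (0,1]$ set
\[
    G_s(\varphi)(x) = \begin{cases} s\,\varphi(x/s), & |x| \le s,\\ |x|\,\varphi(x/|x|), & s \le |x| \le 1,\end{cases}
\]
and $G_0(\varphi) = c(\partial\varphi)$. Routine checks show that the two formulas agree on $|x| = s$, that each $G_s(\varphi)$ is a homeomorphism of $D^4$ (injectivity on the annulus uses that $\varphi$ maps $S^3$ to $S^3$), that $G_s(\varphi)|_{S^3} = \varphi|_{S^3}$ so $G_s(\varphi)$ fixes $V$ pointwise, and that $G_1(\varphi) = \varphi$. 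I expect the one genuinely non-formal point to be continuity of $(\varphi,s) \mapsto G_s(\varphi)$ at $s = 0$: this is exactly the classical Alexander-trick estimate, valid because on the shrinking ball $|x| \le s$ both $G_s(\varphi)(x)$ and $c(\partial\varphi)(x)$ have norm $\le s$, while the two maps agree outside that ball. Granting this, $G$ is the desired homotopy and $c$, $\partial$ are mutually inverse homotopy equivalences.

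Equivalently, one can package the same computation structurally: the map $\varphi \mapsto \bigl(\varphi \circ c(\partial\varphi)^{-1},\, \partial\varphi\bigr)$ is a homeomorphism $\Homeo_V(D^4) \xrightarrow{\ \cong\ } \Homeo_\partial(D^4) \times \Homeo_V(S^3)$, with inverse $(\psi, f) \mapsto \psi \circ c(f)$, carrying $\partial$ to the projection onto the second factor; and $\Homeo_\partial(D^4)$ is contractible by the Alexander trick. Either presentation works, and neither uses any low-dimensional topology — the argument is identical in every dimension. I would use whichever of the two formulations reads more cleanly in context; the self-contained isotopy $G$ has the advantage of not invoking the Alexander trick as a black box, since it \emph{is} the Alexander trick applied fibrewise over the radial coordinate.
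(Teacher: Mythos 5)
Your proposal is correct and is essentially identical to the paper's proof: the coning map $c$ is the paper's section $s$, and your homotopy $G_s$ is precisely the paper's homotopy $H$ (with the parameter renamed), together with the same observation that both constructions stay within the subgroup fixing $V$ pointwise because the boundary restriction is preserved throughout. The extra remarks on continuity at $s=0$ and the product-decomposition reformulation are fine elaborations but do not change the argument.
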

\begin{proof}
    We check that the proof of the Alexander trick, which shows $\Homeo(D^4)\simeq \Homeo(S^3)$ restricts to these subgroups. 
    We first define a map 
    \begin{align*}
        s\colon \Homeo_{V}(S^3) &\overset{\partial}{\to} \Homeo_{V}(D^4)\\
        \varphi &\mapsto \left(x \mapsto |x|\cdot \varphi\big(\tfrac{x}{|x|}\big)\right).
    \end{align*}
    Then $\partial \circ s = \operatorname{id}_{\Homeo_{V}(S^3)}$, and $s\circ \partial \simeq \operatorname{id}_{\Homeo_{V}(D^4)}$ via the following homotopy.
    \begin{align*}
        H\colon \Homeo_{V}(D^4) \times I &\to \Homeo_{V}(D^4)\\
        (\psi, t) &\mapsto \begin{cases}
            |x|\cdot \psi \big(\frac{x}{|x|}\big) & |x|\geq t \\
            t\cdot \psi\big(\frac{x}{t}\big) & |x|\leq t.
        \end{cases}
    \end{align*}
    Note that throughout this homotopy the homeomorphism on the boundary remains unchanged, and so $V\subset \partial M$ remains pointwise fixed, \emph{i.e.}~we stay in the required subgroup. 
\end{proof}

\begin{lem}\label{lem:stabiliser-equivalence}
Let $\D\in \dsep(H_g)$ be a disc system. Then
    \[\Homeo(H_g,\D) \overset{\partial}{\to} \Homeo(U_g, \partial \D)\] is a weak equivalence.
\end{lem}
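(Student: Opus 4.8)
The plan is to analyze the restriction map by cutting $H_g$ along $\D$ and $U_g$ along $\partial\D$, reducing to the model case of a disc in a ball handled by \Cref{lem: conical Alexander trick}. First I would set up the relevant fiber sequences. Acting on the disc system $\D$ gives a fibration $\Homeo_\D(H_g) \to \Homeo(H_g, \D) \to \Homeo(\D)$, where $\Homeo_\D(H_g)$ denotes homeomorphisms fixing $\D$ pointwise, and $\Homeo_\D(H_g) \cong \Homeo_\partial(H_g \ca \D) \times (\text{stuff along } \D)$; more precisely, cutting along $\D$ identifies $\Homeo_\D(H_g)$ with the group of homeomorphisms of the cut manifold $H_g \ca \D$ (a disjoint union of $4$-discs, by hypothesis (2)) that fix the copies of $\D$ in the boundary pointwise. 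Similarly, acting on $\partial\D$ gives $\Homeo_{\partial\D}(U_g) \to \Homeo(U_g, \partial\D) \to \Homeo(\partial\D)$, and cutting identifies $\Homeo_{\partial\D}(U_g)$ with homeomorphisms of $U_g \ca \partial\D$ (a disjoint union of punctured $3$-spheres) fixing the boundary spheres pointwise. The restriction map $\partial$ is compatible with all of this, giving a map of fiber sequences.

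Next I would check the map is an equivalence on base and fibre separately. On the base, the map $\Homeo(\D) \to \Homeo(\partial\D)$ is restriction from a disjoint union of $3$-discs to their boundary $2$-spheres, which is a homotopy equivalence by the Alexander trick in dimension $3$ (or directly by the argument of \Cref{lem: conical Alexander trick} one dimension down). On the fibres, I need $\Homeo_\D(H_g) \to \Homeo_{\partial\D}(U_g)$ to be a weak equivalence. After cutting, both sides decompose as products over the components of $H_g \ca \D$; a component is a $4$-disc $B$ whose boundary $S^3$ is cut by $\partial\D$ into a disjoint union of boundary spheres, and the relevant group is $\Homeo_V(B)$ where $V \subseteq \partial B = S^3$ is the union of those boundary discs of $\partial\D$ together with the rest of the sphere system data — but in fact $V$ is exactly the closure of $\partial\D \cap \partial B$, and we are fixing $V$ pointwise while allowing the complementary region to move. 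This is precisely the setting of \Cref{lem: conical Alexander trick}: $\Homeo_V(D^4) \xrightarrow{\partial} \Homeo_V(S^3)$ is a homotopy equivalence. Taking the product over components gives that $\Homeo_\D(H_g) \to \Homeo_{\partial\D}(U_g)$ is a weak equivalence.

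Finally, by the five lemma applied to the long exact sequences of homotopy groups of the two fiber sequences (or by the fact that a map of fibrations inducing equivalences on base and fibre induces an equivalence on total spaces), the central map $\Homeo(H_g, \D) \to \Homeo(U_g, \partial\D)$ is a weak equivalence. I expect the main obstacle to be bookkeeping the cutting construction carefully enough to identify the fibres: one must verify that cutting along a locally flat disc system behaves well — that $\Homeo_\D(H_g)$ genuinely splits as a product of groups of the form $\Homeo_V(D^4)$ with $V$ as described — and that the submanifold $V \subseteq S^3$ arising on each $4$-disc component really is a submanifold to which \Cref{lem: conical Alexander trick} applies (it is, being a union of locally flat codimension-zero and positive-codimension pieces of the sphere system). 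The dimensional inputs — the Alexander trick for $\Homeo(D^4)$ versus $\Homeo(S^3)$, which holds in all dimensions, and the $3$-dimensional Alexander trick for the base — are the only place where low-dimensional topology enters, and both are elementary.
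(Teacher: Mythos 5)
Your proposal is correct and follows essentially the same route as the paper: a map of fiber sequences, the three-dimensional Alexander trick on the base $\Homeo(\D)\to\Homeo(\partial\D)$, and after cutting along $\D$ a product decomposition $\Homeo_\D(H_g)\cong\prod\Homeo_{\sqcup D^3}(D^4)$ to which \Cref{lem: conical Alexander trick} applies component-wise. The only slip is your description of $V$ as ``the closure of $\partial\D\cap\partial B$'' --- $\partial\D\cap\partial B$ is a union of $2$-spheres and already closed; what you want is the union of the $3$-disc faces of $\partial B$ created by cutting along $\D$ --- but your surrounding text and the ``more precisely'' sentence make the intended meaning clear.
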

\begin{proof}
    Consider the following map of fiber sequences,
    given by restriction to the boundary.
    (The right horizontal maps are Serre fibrations by the argument in \Cref{lem:homeo=diff-of-pair}.)
    \[\begin{tikzcd}
	\Homeo_\D(H_g) & \Homeo(H_g,\D)  & \Homeo(\D) \\
	\Homeo_{\partial\D}(U_g) & \Homeo(U_g,\partial\D)  & \Homeo(\partial\D) 
	\arrow["\partial",from=1-1, to=2-1]
	\arrow[from=1-1, to=1-2]
    \arrow[from=1-2, to=1-3]
	\arrow["\partial", from=1-2, to=2-2]\arrow["\partial", from=1-3, to=2-3]
	\arrow[, from=2-1, to=2-2]
	\arrow[, from=2-2, to=2-3]
\end{tikzcd}\]
The right hand vertical map is an equivalence by the Alexander trick in dimension 3 ($\Homeo(D^3)\to \Homeo(S^2)$ is an equivalence). The domain and codomain of the left hand vertical map can be rewritten as
\[
\Homeo_\D(H_g)\cong \prod \Homeo_{\sqcup D^3}(D^4) 
\qquad\text{and}\qquad
\Homeo_{\partial\D}(U_g)\cong \prod \Homeo_{\sqcup D^3}(S^3).
\]
where the number of factors in the product are the same and correspond to components of $H_g\ca \D$ and $U_g\ca \partial \D$, respectively. Since $\sqcup D^3\subset \partial D^4$, we apply \Cref{lem: conical Alexander trick} to each component. Therefore the left hand vertical map is also an equivalence, hence the central vertical map is an equivalence, as required.
\end{proof}

\begin{lem}\label{lem:disk system for each dual graph}
    Let $\Sigma \in \septop(U_g)_0$ be a separating system in $U_g$.
    Then there is a disc system $\D \in \dsep(H_g)_0$ and $\phi \in \Homeo(U_g)$ with $\phi(\partial \D) = \Sigma$.
\end{lem}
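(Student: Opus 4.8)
The plan is to reduce to a smooth model, construct by hand a disc system in $H_g$ with the correct combinatorics, and then move its boundary onto $\Sigma$ using the classification of sphere systems in $U_g$ by their dual graph.

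First I would pass to the smooth category. By the approximation result used in the proof of \Cref{prop:septop is contractible} (namely \cite{Bing-Approximating-Surfaces}), the locally flat sphere system $\Sigma \subset U_g$ is isotopic to a smooth separating system, and an ambient isotopy is an element of $\Homeo(U_g)$; so we may assume $\Sigma$ is smooth. Now let $G$ be the dual graph of $\Sigma$: one vertex for each component of $U_g \ca \Sigma$, one edge for each component of $\Sigma$, with incidences recording adjacency. Since $U_g$ is built as a graph of spaces over $G$ with all vertex spaces (punctured $3$-spheres) and edge spaces ($2$-spheres) simply connected, a van Kampen argument gives $\pi_1(U_g) \cong \pi_1(G)$, hence $G$ is connected with $b_1(G) = g$; moreover condition (2) of \Cref{def: septop as simplicial set} says every vertex of $G$ has valence at least $2$. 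Build the $4$-manifold $N(G)$ by taking one copy of $D^4$ for each vertex and attaching one $4$-dimensional $1$-handle $D^1 \times D^3$ for each edge, along disjoint $3$-balls in the boundary $3$-spheres with the orientation-compatible framing. Contracting a spanning tree exhibits $N(G)$ as $D^4$ with $g$ further $1$-handles attached, so $N(G) \cong \natural^g(S^1 \times D^3) = H_g$. For each edge $\epsilon$ let $D_\epsilon = \{\tfrac12\} \times D^3$ be the cocore of the corresponding handle and set $\D := \bigsqcup_\epsilon D_\epsilon$. Cutting $N(G)$ along $\D$ merges the half-handles back into the $0$-handles and returns $\bigsqcup_v D^4$, while $\partial\D = \bigsqcup_\epsilon \{\tfrac12\}\times S^2$ cuts $\partial N(G) = U_g$ into $\bigsqcup_v\big(S^3 \setminus \bigsqcup_{\mathrm{val}(v)}\interior{D}^3\big)$; since all valences are at least $2$ this is a separating system, so $\D \in \dsep(H_g)_0$, and by construction the dual graph of $\partial\D \subset U_g$ is again $G$.

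The step I expect to be the main obstacle is to conclude that $\Sigma$ and $\partial\D$ lie in the same $\Homeo(U_g)$-orbit from the fact that they have the same dual graph. For smooth sphere systems in $U_g = \#^g(S^1\times S^2)$ whose complementary pieces are all punctured $3$-spheres, the abstract dual graph should be a complete diffeomorphism invariant: one rebuilds the pair $(U_g,\text{sphere system})$ from $G$ by gluing a $\mathrm{val}(v)$-punctured ball for each vertex along orientation-reversing diffeomorphisms of $S^2$, and this datum is well defined up to isotopy because $\pi_0\Diff^-(S^2) = *$ and because the mapping class group of a punctured ball realizes every permutation of its boundary spheres; the only delicate points are the bookkeeping of orientations and of the correspondence between boundary spheres and half-edges. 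Granting this, $\Sigma$ and $\partial\D$ lie in the same $\Diff(U_g)$-orbit, so there is $\phi \in \Diff(U_g) \subseteq \Homeo(U_g)$ with $\phi(\partial\D) = \Sigma$, and $(\D,\phi)$ is as required. Alternatively, one can invoke the description of $\sep(U_g)$ and its $\Diff(U_g)$-orbits from \cite{BoydBregmanSteinebrunner-finiteness}, or carry out the classification directly for locally flat sphere systems so as to bypass the initial smoothing step.
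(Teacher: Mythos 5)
Your proof follows essentially the same strategy as the paper's: build $H_g$ as a graph of $D^4$'s and $1$-handles over the dual graph of $\Sigma$, take $\D$ to be the cocores of the $1$-handles, and then produce $\phi$ by matching the complementary punctured $S^3$'s (transitivity of $\Homeo(S^3)$ on disc collections) and adjusting on the gluing spheres via connectivity of $\Homeo^{\pm}(S^2)$. The initial smoothing of $\Sigma$ via \cite{Bing-Approximating-Surfaces} is an unnecessary detour (the paper glues directly in the topological category, where the required transitivity and connectivity facts hold just as well), but it does no harm, and your spanning-tree justification that $N(G)\cong H_g$ makes explicit a point the paper leaves implicit.
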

\begin{proof}
     Consider the dual graph $\Gamma_\Sigma$ of $\Sigma \subset U_g$, whose vertices correspond to the components of $U_g \ca \Sigma$, and whose edges correspond to the spheres in $\Sigma$. We first build a manifold $N$ that is homeomorphic to $H_g$ and contains a disc system $\D$ such that $\Gamma_{\partial \D}\cong \Gamma_\Sigma$, as follows. To each $v\in V(\Gamma_\Sigma)$ assign a disc $D^4_v$, to obtain the manifold $\coprod_{v\in V(\Gamma_\Sigma)} D^4_v$. 
     Now for each edge $e=\{v,w\}$ in $E(\Gamma_\Sigma)$ choose a standard 3-disc in the boundaries of the 4-discs $D^4_v$ and $D^4_w$, respectively, and glue on a handle $h_e=D_e^3\times I$, in such a way that the attaching discs are pairwise disjoint. The output of this construction is a topological manifold $N$ with boundary that is homeomorphic to $H_g$. Let $\psi\colon N\cong H_g$ be a choice of such a homeomorphism. The required disc system $\D\subset H_g$ is given by $\sqcup_{e\in E(\Gamma_\Sigma)}\psi(D_e^3\times \{\frac{1}{2}\})$.

    It remains to find $\phi \in \Homeo(U_g)$ with $\phi(\partial \D) = \Sigma$. By construction, we have an identification of dual graphs $f\colon\Gamma_\Sigma\cong \Gamma_{\partial \D}$ giving a bijection between components $\pi_0(\Sigma)$ and $\pi_0(\partial \D)$ (edges), and between components $\pi_0(U_g \ca \Sigma)$ and $\pi_0(U_g \ca \partial \D)$ (vertices).
    We build $\phi \in \Homeo(U_g)$ via the following two steps.
    \begin{enumerate}
        \item Each component $K \in \pi_0(U_g \ca \Sigma)$ is homeomorphic to the component $f(K)=K' \in \pi_0(U_g \ca \partial \D)$, and to $S^3\setminus \sqcup_m \interior{D}^3$, where $m$ is the valence of the corresponding vertex in the dual graph. Furthermore, each boundary component of $K$ (resp.~$K'$) is identified with a sphere in $\Sigma$ (resp.~$\partial D$). Using this identification, pick a homeomorphism $\phi_K\colon K \to K'$ such that $\phi_K(S)=f(S)\in \pi_0(\partial \D)$ for all $S\in \Sigma$ (this is always possible since $\Homeo(S^3)$ acts transitively on embedded discs).
        \item We now make the homeomorphisms $\{\phi_K\}_{K\in \pi_0(U_g \,\ca\, \Sigma)}$ compatible so that they can be glued along $\Sigma$ in the domain and $\partial \D$ in the codomain to yield the desired $\phi \in \Homeo(U_g)$. Given a sphere $S\subset \Sigma$ there are components $K_1,K_2$ of $U_g\ca \Sigma$ and  inclusions $\rho_i\colon S\rightarrow S_i\subset \partial K_i$ for $i=1,2$. From Step $(1)$, we have homeomorphisms $\phi_{K_i}\colon K_i\rightarrow K_i'$ such that $\phi_{K_i}(S_i)$  corresponds to the sphere $f(S)\in \partial \D$ under the inclusions $\rho_i'\colon f(S)\rightarrow K_i'$. Since $\Homeo^+(S^2)$ is path-connected we can isotope $\phi_{K_1}$ in a neighbourhood of $S_1$ so that $(\rho_1')^{-1}\circ\phi_{K_1}\circ\rho_1=(\rho_2')^{-1}\circ\phi_{K_2}\circ \rho_2.$ After doing this for each $S\in \Sigma$, the $\phi_K$ agree along each component of $\Sigma$ hence we obtain our desired $\phi\in \Homeo(U_g)$.\qedhere
    \end{enumerate}
\end{proof}

\begin{thm}\label{thm:section}
    The map
    $\partial\colon \BHomeo(H_g) \to \BHomeo(U_g)$
    has a section.
\end{thm}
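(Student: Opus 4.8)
Here is the strategy I would follow.

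For $g=0$ the statement is immediate: by \Cref{lem: conical Alexander trick} with $V=\emptyset$ the restriction $\Homeo(D^4)\to\Homeo(S^3)$ is a homotopy equivalence, so $\partial$ is already an equivalence. Assume henceforth $g\ge 1$, so that \Cref{prop:septop is contractible} applies. Write $G=\SingHomeo(H_g)$ and $H=\SingHomeo(U_g)$, and let
\[
    F\colon \delta(\dsep(H_g))_\bullet\hq G \longrightarrow \delta(\septop(U_g))_\bullet\hq H
\]
be the map induced by $\D\mapsto \D\cap\partial H_g$ on systems (a map of simplicial sets by condition (3) of \Cref{def: dseptop as simplicial set}) and by restriction to $\partial H_g$ on homeomorphisms; it is equivariant over the restriction homomorphism $G\to H$ because homeomorphisms preserve the boundary. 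It fits into a commutative square
\[\begin{tikzcd}
    \delta(\dsep(H_g))_\bullet\hq G \ar[r,"F"] \ar[d] & \delta(\septop(U_g))_\bullet\hq H \ar[d] \\
    *\hq G \ar[r,"\partial"'] & *\hq H
\end{tikzcd}\]
whose vertical maps collapse the systems. After realisation the bottom row is $\partial\colon\BHomeo(H_g)\to\BHomeo(U_g)$ by the discussion of $|*\hq\Sing(-)|$ in \Cref{sec:simplicialprelims}, and the right-hand vertical map is a weak equivalence by \Cref{prop:septop is contractible} (homotopy orbits of a weakly contractible simplicial set model $BH$). So it suffices to prove that $F$ is a weak equivalence: then, inverting the two weak equivalences, we obtain up to homotopy a map
\[
    s\colon \BHomeo(U_g) \xleftarrow{\ \simeq\ } \delta(\septop(U_g))_\bullet\hq H \xleftarrow{\ \simeq\ } \delta(\dsep(H_g))_\bullet\hq G \longrightarrow \BHomeo(H_g),
\]
with middle arrow $F$ and last arrow the left vertical collapse map, and commutativity of the square gives $\partial\circ s\simeq\mathrm{id}$; arranging $\partial$ to be a fibration rectifies this to an honest section.

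To show $F$ is a weak equivalence I would work one nerve degree at a time. Since forming homotopy orbits commutes with diagonal realisation, $F$ is the realisation over $[n]$ of the maps $F_n\colon \dsep_n(H_g)_\bullet\hq G \to \septop_n(U_g)_\bullet\hq H$, so it is enough that each $F_n$ be a weak equivalence. For $n=0$ I would apply the orbit--stabiliser lemma \Cref{lem:orbit-stabiliser} in the form of hypothesis (2), with $X=\dsep_0(H_g)_\bullet$, $Y=\septop_0(U_g)_\bullet$, the groups $G,H$ with their restriction homomorphism, and $f=(\D\mapsto\partial\D)$. The orbit maps $G\to\dsep_0(H_g)_\bullet$ and $H\to\septop_0(U_g)_\bullet$ are Kan fibrations by \Cref{lem:homeo->sep is fibration}; the condition that every component of $\septop_0(U_g)_\bullet$ is hit by an orbit is exactly \Cref{lem:disk system for each dual graph}; and under the identifications $\Stab_G(\D)=\Sing\Homeo(H_g,\D)$ and $\Stab_H(\partial\D)=\Sing\Homeo(U_g,\partial\D)$ the induced map on stabilisers is the weak equivalence of \Cref{lem:stabiliser-equivalence}. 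Hence $F_0$ is a weak equivalence.

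For $n>0$ I would bootstrap from $F_0$ using \Cref{lem:coverings}: there the maps $d_0^n$ exhibit $\dsep_n(H_g)_\bullet$ and $\septop_n(U_g)_\bullet$ as finite coverings of $\dsep_0(H_g)_\bullet$ and $\septop_0(U_g)_\bullet$, and the square comparing them along restriction to the boundary is a pullback; both $d_0^n$ are equivariant. Passing to homotopy orbits preserves both features — a covering stays a covering after crossing with $E(-)$ and taking the free quotient, and the pullback square of simplicial sets induces a pullback square of homotopy orbits — so $F_n$ is the base change of $F_0$ along the covering $\septop_n(U_g)_\bullet\hq H\to\septop_0(U_g)_\bullet\hq H$, hence a weak equivalence by right properness of simplicial sets. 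Realising over $[n]$ then shows $F$ is a weak equivalence, completing the proof. The steps I expect to require the most care are the simplicial verification of the orbit--stabiliser hypotheses — in particular identifying the simplicial stabilisers with the honest topological groups of \Cref{lem:stabiliser-equivalence}, and checking that homotopy orbits genuinely carry the pullback-of-coverings square to a pullback square. An alternative that avoids this bootstrap is to prove directly the evident chain version of \Cref{lem:stabiliser-equivalence} (the same fibre-sequence argument, with the components of $\D$ now merely carrying a filtration label that the Alexander-trick equivalences respect) and then apply the orbit--stabiliser lemma uniformly in every $n$.
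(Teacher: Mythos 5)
Your proposal follows essentially the same route as the paper: apply the orbit--stabiliser lemma to $\dsep_0(H_g)_\bullet \to \septop_0(U_g)_\bullet$ (using \Cref{lem:homeo->sep is fibration}, \Cref{lem:disk system for each dual graph}, and \Cref{lem:stabiliser-equivalence}), extend to chains via the pullback-of-coverings square from \Cref{lem:coverings}, and conclude using \Cref{prop:septop is contractible} to invert the zigzag. Your separate treatment of $g=0$ via the Alexander trick is a sensible precaution given that the condition $m_i\ge2$ in \Cref{def: septop as simplicial set} leaves $\septop(U_0)$ empty, but it does not change the substance of the argument.
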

\begin{proof}
    As recalled in Section 2, we can model classifying spaces as the realisation of the simplicial homotopy orbit construction and so it suffices to construct a section (up to homotopy) of the map
    \[
        |*\hq\SingHomeo(H_g)| \xrightarrow[\quad]{\partial} |*\hq\SingHomeo(U_g)|.
    \]

    We will apply \Cref{lem:orbit-stabiliser} to the map $\dsep_0(H_g)_\bt \to \septop_0(U_g)_\bt$ to prove that the map
    \[
        \dsep_0(H_g)_\bt \hq \SingHomeo(H_g) \longrightarrow
        \septop_0(U_g)_\bt \hq \SingHomeo(U_g)
    \]
    is a weak equivalence.
    By \Cref{lem:homeo->sep is fibration} both of the action maps $\SingHomeo(H_g) \to \dsep_0(H_g)_\bt$ and $\SingHomeo(U_g) \to \septop_0(U_g)_\bt$ are fibrations.
    The map of stabiliser groups is 
    \[
        \SingHomeo(H_g, \D) \longrightarrow \SingHomeo(U_g, \partial \D)
    \]
    which is indeed a weak equivalence by \Cref{lem:stabiliser-equivalence}.
    Moreover, by \Cref{lem:disk system for each dual graph}, every $\Sigma \in \septop_0(U_g)_0$ can be written as $\phi(\partial \D)$ for some $\D \in \dsep_0(H_g)_0$ and $\phi \in \Homeo(U_g)$.
    This verifies the surjectivity condition of \Cref{lem:orbit-stabiliser}, so we have
    \[
        \dsep_n(H_g)_\bt \hq \SingHomeo(H_g) \longrightarrow
        \septop_n(U_g)_\bt \hq \SingHomeo(U_g)
    \]
    is a weak equivalence for $n=0$.
    Because the left square in \Cref{lem:coverings} remains a pullback after taking homotopy orbits, the case of general $n$ follows.
    
    We have thus shown that the left hand map in the diagram
    \[\begin{tikzcd}
    	\delta(\dsep(H_g))_\bt \hq \SingHomeo(H_g) & *\hq\SingHomeo(H_g) \\
    	\delta(\septop(U_g))_\bt \hq \SingHomeo(U_g)& *\hq \SingHomeo(U_g).
    	\arrow["\partial", "\simeq"', from=1-1, to=2-1]
    	\arrow[from=1-1, to=1-2]
    	\arrow["\partial", from=1-2, to=2-2]
    	\arrow["\simeq", from=2-1, to=2-2]
    \end{tikzcd}\]
    is a weak equivalence, as depicted.
    The bottom map is a weak equivalence as $\delta(\septop(U_g))_\bt \simeq \delta (\Sing\sep(U_g))_\bt \simeq *$ by \Cref{prop:septop is contractible}.
    After passing to geometric realisations we can find homotopy inverses to these equivalences and the desired lift is then given by starting at the bottom right and going left, up, and right.
\end{proof}

We finish with the proof of \Cref{cor:section}. In particular, applying this inductively gives the analogous result with multiple $U_g$ boundary components.

\begin{cor}\label{cor:section}
    Let $M$ be a compact $4$-manifold with $U_g \subseteq \partial M$.
    Then the restriction map
    \[
        \BHomeo(M \cup_{U_g} H_g, H_g) \longrightarrow \BHomeo(M, U_g)
    \]
    admits a section.
\end{cor}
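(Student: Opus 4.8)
\textbf{Proof proposal for \Cref{cor:section}.}

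The plan is to reduce the statement for $M$ to the statement for $H_g$ already proved in \Cref{thm:section}, by gluing. First I would set up the relevant classifying spaces: $\BHomeo(M, U_g)$ classifies topological $M$-bundles equipped with a trivialisation of the restricted $U_g$-boundary bundle to the trivial bundle $B \times U_g$ (equivalently, homeomorphisms are required to fix $U_g$ pointwise, or at least fix the sub-boundary $U_g$ as a labelled boundary component), and similarly $\BHomeo(M \cup_{U_g} H_g, H_g)$ classifies $(M \cup_{U_g} H_g)$-bundles together with a trivialisation of the sub-bundle with fiber $H_g$. Gluing a fixed copy of $H_g$ onto the $U_g$-boundary of an $M$-bundle fiberwise defines the restriction map in the statement; I want to invert it up to homotopy.

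The key step is to observe that the homotopy fiber of each restriction map is the same. Writing $N = M \cup_{U_g} H_g$, a point of $\BHomeo(N, H_g)$ over a point of $\BHomeo(M, U_g)$ amounts to extending the bundle structure from $M$ across the glued $H_g$, rel the $U_g$ along which we glued; but an $N$-bundle with trivialised $H_g$-part and prescribed $M$-part is the same data as a $U_g$-bundle structure compatible on both sides, so the fiber is governed by $\BHomeo(U_g)$ versus $\BHomeo(H_g, \text{rel } U_g)$-type data. More precisely, I would write down the commuting square
\[\begin{tikzcd}
    \BHomeo(N, H_g) \ar[r] \ar[d] & \BHomeo(M, U_g) \ar[d] \\
    \BHomeo(H_g) \ar[r, "\partial"] & \BHomeo(U_g)
\end{tikzcd}\]
where the vertical maps restrict a bundle to the $H_g$-part (resp.\ the $U_g$-part) and the bottom map is the one from \Cref{thm:section}; I claim this square is a homotopy pullback. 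Granting the pullback claim, the section $s\colon \BHomeo(U_g) \to \BHomeo(H_g)$ from \Cref{thm:section} pulls back along $\BHomeo(M, U_g) \to \BHomeo(U_g)$ to produce the desired section of the top horizontal map, since a section of a map is pulled back to a section along any base change of a homotopy pullback square.

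The main obstacle is establishing that the square is a homotopy pullback. I would argue this at the level of the simplicial models used in the paper: a bundle over $B$ classified by the pullback is an $M$-bundle $E_M \to B$ together with an $N$-bundle $E_N \to B$, a trivialisation of the $H_g$-part of $E_N$, and an identification of the $U_g$-restriction of $E_M$ with $B \times U_g$ matching the $U_g$-restriction of $E_N$ under $\partial$; gluing the fixed $H_g$ onto $E_M$ and using the isotopy extension / collar uniqueness for the $U_g$-boundary shows this is equivalent to the data classified by $\BHomeo(N, H_g)$ directly, the point being that once the $M$-part and the boundary $U_g$-bundle are fixed, filling in across $H_g$ is a torsor over $\Homeo(H_g \text{ rel } U_g)$ on both sides and the two agree. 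The technical care needed is exactly the collar/boundary bookkeeping (ensuring the gluing region behaves well in families), which is routine given the isotopy extension theorem for locally flat embeddings already invoked in \Cref{sec:topsep}. Finally, I would note the parenthetical remark in the statement: iterating this construction one boundary component at a time handles a $4$-manifold $M$ with several $U_{g_i}$ boundary components, since after filling one component the result is again a compact $4$-manifold with the remaining $U_{g_j}$'s in its boundary.
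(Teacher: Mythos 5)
Your proposal is essentially the same as the paper's proof: both set up the commuting square relating $\BHomeo(M \cup_{U_g} H_g, H_g)$, $\BHomeo(M, U_g)$, $\BHomeo(H_g)$, and $\BHomeo(U_g)$, argue it is a homotopy pullback, and transport the section from \Cref{thm:section} across it. The only cosmetic difference is how the pullback is justified — you argue via a bundle-gluing description, while the paper identifies the induced map on fibers as the equivalence $\BHomeo_M(M \cup_{U_g} H_g) \to \BHomeo_{U_g}(H_g)$ after noting surjectivity on $\pi_1$; and note that in the paper's convention $\Homeo(M,U_g)$ fixes $U_g$ \emph{setwise}, so $\BHomeo(M,U_g)$ classifies bundles with a distinguished $U_g$ sub-bundle rather than a trivialised one, matching the second of the two descriptions you hedge between.
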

\begin{proof}
    The restriction maps between homeomorphism groups induce a square of classifying spaces
    \[\begin{tikzcd}
        {\BHomeo(M \cup_{U_g} H_g, H_g)} \ar[r] \ar[d] \ar[dr, very near start, phantom, "\lrcorner"] &
        {\BHomeo(H_g)} \ar[d] \\
        {\BHomeo(M, U_g)} \ar[r] \ar[u, bend left, dashed] & 
        {\BHomeo(U_g)}. \ar[u, bend left, "s"]
    \end{tikzcd}\]
    The vertical maps are surjective on $\pi_1$ by \Cref{thm:section} (or \cite{LaudenbachPoenaru1972}). 
    This is a homotopy pullback square as the induced map on vertical fibers is
    \[
        \BHomeo_{M}(M \cup_{U_g} H_g) \longrightarrow 
        \BHomeo_{U_g}(H_g),
    \]
    which is an equivalence.
    The right vertical map in the square admits a section by \Cref{thm:section} and thus we can pull it back to obtain a section of the left vertical map.
\end{proof}

\bibliography{mybib}{}

\begin{thebibliography}{OWR23}

\bibitem[BBP23]{Brendle2023}
Tara Brendle, Nathan Broaddus, and Andrew Putman.
\newblock The mapping class group of connect sums of {$S^2 \times S^1$}.
\newblock {\em Transactions of the American Mathematical Society}, 2023.

\bibitem[BBS24]{BoydBregmanSteinebrunner-finiteness}
Rachael Boyd, Corey Bregman, and Jan Steinebrunner.
\newblock Moduli spaces of $3$-manifolds with boundary are finite.
\newblock {\em arXiv preprint 2404.12748}, 2024.

\bibitem[BG23]{BudneyGabai2023}
Ryan Budney and David Gabai.
\newblock On the automorphism groups of hyperbolic manifolds.
\newblock {\em arXiv preprint 2303.05010}, 2023.

\bibitem[Bin54]{Bing54}
R.~H. Bing.
\newblock Locally tame sets are tame.
\newblock {\em Ann. of Math. (2)}, 59:145--158, 1954.

\bibitem[Bin57]{Bing-Approximating-Surfaces}
R.~H. Bing.
\newblock Approximating surfaces with polyhedral ones.
\newblock {\em Ann. of Math. (2)}, 65:465--483, 1957.

\bibitem[BLR75]{BurgheleaLashofRothenberg1975}
Dan Burghelea, Richard Lashof, and Melvin Rothenberg.
\newblock {\em Groups of automorphisms of manifolds}, volume Vol. 473 of {\em
  Lecture Notes in Mathematics}.
\newblock Springer-Verlag, Berlin-New York, 1975.
\newblock With an appendix (``The topological category'') by E. Pedersen.

\bibitem[Cer61]{Cerf}
Jean Cerf.
\newblock Topologie de certains espaces de plongements.
\newblock {\em Bull. Soc. Math. France}, 89:227--380, 1961.

\bibitem[Ebe13]{Ebert2013}
Johannes Ebert.
\newblock A vanishing theorem for characteristic classes of odd-dimensional
  manifold bundles.
\newblock {\em Journal f\"{u}r die reine und angewandte Mathematik (Crelles
  Journal)}, 2013(684):1–29, 2013.

\bibitem[Eps66]{Epstein}
D.~B.~A. Epstein.
\newblock Curves on {$2$}-manifolds and isotopies.
\newblock {\em Acta Math.}, 115:83--107, 1966.

\bibitem[ERW14]{Ebert-RW-generalised-MMM}
Johannes Ebert and Oscar Randal-Williams.
\newblock Generalised {Miller–Morita–Mumford} classes for block bundles and
  topological bundles.
\newblock {\em Algebraic \& Geometric Topology}, 14(2):1181–1204, March 2014.

\bibitem[ERW19]{EbertRandalWilliams}
Johannes Ebert and Oscar Randal-Williams.
\newblock Semi-simplicial spaces.
\newblock {\em Algebr. Geom. Topol.}, 19(4):2099--2150, 2019.

\bibitem[Fis60]{Fisher}
Gordon~M. Fisher.
\newblock On the group of all homeomorphisms of a manifold.
\newblock {\em Trans. Amer. Math. Soc.}, 97:193--212, 1960.

\bibitem[Ham61]{Hamstrom}
Mary-Elizabeth Hamstrom.
\newblock Regular mappings and the space of homeomorphisms on a {$3$}-manifold.
\newblock {\em Mem. Amer. Math. Soc.}, 40:42, 1961.

\bibitem[Hat81]{hatcher1981}
Allen Hatcher.
\newblock On the diffeomorphism group of {$S\sp{1}\times S\sp{2}$}.
\newblock {\em Proc. Amer. Math. Soc.}, 83(2):427--430, 1981.

\bibitem[Hat83]{Hatcher}
Allen Hatcher.
\newblock A proof of the {S}male conjecture, {${\rm Diff}(S^{3})\simeq {\rm
  O}(4)$}.
\newblock {\em Ann. of Math. (2)}, 117(3):553--607, 1983.

\bibitem[Hat03]{Hatcher1981revised}
Allen Hatcher.
\newblock On the diffeomorphism group of {$S^1\times S^2$}.
\newblock \newline \url{https://pi.math.cornell.edu/~hatcher/Papers/newDiffS1xS2.pdf},
  2003.

\bibitem[Hat12]{Hatcher2012}
Allen Hatcher.
\newblock Stable homology of spaces of graphs.
\newblock \newline Talk notes:
  \url{https://pi.math.cornell.edu/~hatcher/Papers/StanfordTalk.pdf}, 2012.

\bibitem[Kis60]{Kister}
J.~M. Kister.
\newblock Isotopies in {$3$}-manifolds.
\newblock {\em Trans. Amer. Math. Soc.}, 97:213--224, 1960.

\bibitem[Kup15]{Kupers-homeo}
Alexander Kupers.
\newblock Proving homological stability for homeomorphisms of manifolds.
\newblock {\em arXiv preprint 1510.02456}, 2015.

\bibitem[LP72]{LaudenbachPoenaru1972}
Fran{\c c}ois Laudenbach and Valentin P{o\'e}naru.
\newblock A note on {$4$}-dimensional handlebodies.
\newblock {\em Bull. Soc. Math. France}, 100:337--344, 1972.

\bibitem[May92]{May1992simplicial}
J.P. May.
\newblock {\em Simplicial Objects in Algebraic Topology}.
\newblock Chicago Lectures in Mathematics. University of Chicago Press, 1992.

\bibitem[Mil56]{Milnor1956}
John Milnor.
\newblock Construction of universal bundles, ii.
\newblock {\em The Annals of Mathematics}, 63(3):430, 1956.

\bibitem[Moi54]{Moise54}
Edwin~E. Moise.
\newblock Affine structures in {$3$}-manifolds. {VIII}. {I}nvariance of the
  knot-types; local tame imbedding.
\newblock {\em Ann. of Math. (2)}, 59:159--170, 1954.

\bibitem[Niu24]{NiuThesis}
Weizhe Niu.
\newblock Mapping class groups of 4-manifolds via barbell diffeomorphisms,
  {B}udney–{G}abai invariants and handle structures. {P}h{D} thesis.
\newblock Available at: \url{https://theses.gla.ac.uk/84779/}, 2024.

\bibitem[OWR23]{OWR-Morphisms-in-low-dimensions}
Morphisms in low dimensions.
\newblock {\em Oberwolfach Rep.}, 20(1):215--259, 2023.
\newblock Abstracts from the workshop held January 22--28, 2023, Organized by
  Andrew Lobb, Maggie Miller and Arunima Ray.

\bibitem[Pal60]{Palais60}
Richard~S. Palais.
\newblock Local triviality of the restriction map for embeddings.
\newblock {\em Comment. Math. Helv.}, 34:305--312, 1960.

\bibitem[Rad25]{Rado}
T.~Rad\'o.
\newblock {\"{U}}ber den {B}egriff der {R}iemannschen {F}l\"{a}che.
\newblock {\em Acta Sci. Math. Szeged}, 2:101--121, 1925.

\end{thebibliography}
\bibliographystyle{alpha}
\end{document}